\theoremstyle{plain}
\newtheorem{Thm}{Theorem}[section]
\newtheorem{Prop}[Thm]{Proposition}
\newtheorem{Lem}[Thm]{Lemma}
\theoremstyle{definition}
\newtheorem{Defn}[Thm]{Definition}
\newtheorem{Rem}[Thm]{Remark}
\numberwithin{equation}{section}
\title{On formal non-commutative deformations of smooth varieties}
\author{Yujiro Kawamata}
\begin{document}
\maketitle

\tableofcontents

\begin{abstract}
We will develop a formal non-commutative (NC) deformation theory of smooth algebraic varieties $X$
defined over a field $k$, and describe a semi-universal deformation where the tangent space $T^1$ and the
obstruction space $T^2$ are given by the Hochschild cohomology groups.

14D15, 16E40.
\end{abstract}

\section{Introduction}

We will develop a formal non-commutative (NC) deformation theory of smooth algebraic varieties $X$
defined over a field $k$.
This is an extension of the theory in \cite{Toda} where the first order deformations are considered. 
We consider two versions of NC deformations, non-twisted and twisted NC deformations.
The former is a subfunctor of the letter and treats only deformations of the variety $X$ itself, while the letter
treats deformations of pairs consisting of varieties and the system of twistings.
The pair determines a category of twisted sheaves, but we do not consider the deformations of categories themselves.

We will describe infinitesimal extensions of NC deformations by using the {\em tangent space} $T^1$ and 
the {\em obstruction space} $T^2$ of the deformation functor.
They are given by the Hochschild-Kostant-Rosenberg decompositions of the Hochschild cohomologies in the
case of twisted NC deformations:
\[
\begin{split}
&T^1 \cong H^0(X, \bigwedge^2 T_X) \oplus H^1(X, T_X) \oplus H^2(X, \mathcal O_X), \\
&T^2 \cong H^0(X, \bigwedge^3 T_X) \oplus H^1(X, \bigwedge^2 T_X) \oplus H^2(X, T_X) \oplus H^3(X, \mathcal O_X),
\end{split}
\] 
where $T_X$ is the tangent sheaf of $X$ (Theorem \ref{T1T2'}).
The case of non-twisted NC deformations corresponds to their subspaces consisting of those factors 
except $H^2(X, \mathcal O_X)$ and $H^3(X, \mathcal O_X)$ (Theorem \ref{T1T2}).

We will prove that our NC deformation functors satisfy the conditions of Schlessinger (\cite{Schlessinger}) 
so that it has a semi-universal formal deformation (Theorems \ref{semi-universal} and \ref{semi-universal'}).

The following are the main results in the case of twisted NC deformations:

\begin{Thm}[= Theorem \ref{T1T2'}]
Let $X$ be a smooth variety, let $0 \to J \to R' \to R \to 0$ be a small extension of Artin local algebras 
(i.e., $J$ is killed by the maximal ideal), and 
let $\mathcal A$ be a twisted NC deformation of $X$ over $R$.
Then the following hold.

(1) There is a class $\xi^{(3,0)} \in J \otimes H^0(X, \bigwedge^3 T_X)$.
If $\xi^{(3,0)} = 0$, then there is a class $\xi^{(2,1)} \in J \otimes H^1(X, \bigwedge^2 T_X)$.
If $\xi^{(2,1)} = 0$, then there is a class $\xi^{(0,3)} \in J \otimes H^3(X, \mathcal O_X)$.
If $\xi^{(0,3)} = 0$, then there is a class $\xi^{(1,2)} \in J \otimes H^2(X, T_X)$.
And $\xi^{(3,0)} = \xi^{(2,1)} = \xi^{(1,2)} = \xi^{(0,3)} = 0$ hold if and only if there exists a twisted NC deformation $\mathcal A'$
over $R'$ which induces $\mathcal A$ over $R$.

(2) Assume that $\xi^{(3,0)} = \xi^{(2,1)} = \xi^{(1,2)} = \xi^{(0,3)} = 0$.
Then the set of isomorphism classes of all extensions $\mathcal A'$ 
corresponds bijectively to a vector space $J \otimes (H^0(X, \bigwedge^2 T_X) \oplus H^1(X, T_X) \oplus H^2(X, \mathcal O_X))$.

(3) The obstruction classes $\xi^{(3,0)}, \xi^{(2,1)}, \xi^{(1,2)}, \xi^{(0,3)}$ are functorially determined with respect to 
commutative diagrams of small extensions
\begin{equation}
\begin{CD}
0 @>>> J @>>> R' @>>> R @>>> 0 \\
@. @V{\beta_J}VV @V{\beta'}VV @V{\beta}VV \\
0 @>>> J_1 @>>> R'_1 @>>> R_1 @>>> 0.
\end{CD}
\end{equation}
\end{Thm}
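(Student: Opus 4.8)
The plan is a local-to-global obstruction argument on an affine cover, in which the constituent data of a twisted NC deformation are deformed one layer at a time. Fix a finite affine open cover $\{U_i\}$ of $X$, so that all finite intersections $U_{i_0\cdots i_p}$ are again affine; recall that over this cover $\mathcal A$ is presented by sheaves of flat $R$-algebras $\mathcal A_i$ on $U_i$ with $\mathcal A_i\otimes_R k\cong\mathcal O_{U_i}$, by gluing isomorphisms $\phi_{ij}$ from $\mathcal A_j|_{U_{ij}}$ to $\mathcal A_i|_{U_{ij}}$ reducing to the identity on $\mathcal O_{U_{ij}}$, and by twisting units $a_{ijk}$ satisfying $\phi_{ij}\phi_{jk}=\mathrm{Ad}(a_{ijk})\,\phi_{ik}$ on $U_{ijk}$ together with the twisted cocycle condition on $U_{ijkl}$. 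The numerical input used throughout is that $J$, being killed by $\mathfrak m_{R'}$, is a $k$-vector space; hence for a flat $R$-algebra $B$ with $B\otimes_R k\cong C$ one has $J\otimes_R B\cong J\otimes_k C$ and $HH^n_R(B,J\otimes_R B)\cong J\otimes_k HH^n_k(C,C)$, which on a smooth affine is $J\otimes_k\Gamma(\bigwedge^n T)$ by Hochschild--Kostant--Rosenberg. The whole argument rests on the functoriality of the classical Hochschild obstruction and torsor classes for associative algebras: that is what allows the local classes to be glued, and what will give part (3) almost for free.

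\emph{Construction of the four obstructions.} \emph{Step 1.} Lifting the flat algebra $\mathcal A_i$ to an $R'$-algebra $\mathcal A'_i$ has a canonical obstruction in $HH^3_R(\mathcal A_i,J\otimes_R\mathcal A_i)\cong J\otimes H^0(U_i,\bigwedge^3 T_X)$; by functoriality these agree on the overlaps under the $\phi_{ij}$ and patch to $\xi^{(3,0)}\in J\otimes H^0(X,\bigwedge^3 T_X)$, which vanishes iff all $\mathcal A'_i$ exist. \emph{Step 2.} Given such $\mathcal A'_i$, on $U_{ij}$ the two $R'$-lifts $\mathcal A'_i$ and $\mathcal A'_j$ of $\mathcal A_i|_{U_{ij}}$ (after identifying $\mathcal A_j|_{U_{ij}}$ with $\mathcal A_i|_{U_{ij}}$ via $\phi_{ij}$) differ by an element of the torsor $HH^2_R\cong J\otimes H^0(U_{ij},\bigwedge^2 T_X)$; by additivity of torsor differences on triple overlaps these assemble to a \v{C}ech $1$-cocycle, whose class $\xi^{(2,1)}\in J\otimes H^1(X,\bigwedge^2 T_X)$ is independent of all choices and vanishes iff the $\mathcal A'_i$ can be chosen so that $R'$-lifts $\phi'_{ij}$ of the $\phi_{ij}$ exist. \emph{Step 3.} Fixing such $\mathcal A'_i,\phi'_{ij}$ and arbitrary lifts $a'_{ijk}$ of the twisting units, the failure of the twisted cocycle condition among the $a'_{ijk}$ is a unit congruent to $1$ modulo $J$, hence of the form $1+s_{ijkl}$ with $s_{ijkl}$ a section of $J\otimes_R\mathcal A_i=J\otimes_k\mathcal O_{U_{ijkl}}$; this is a \v{C}ech $3$-cocycle and its class $\xi^{(0,3)}\in J\otimes H^3(X,\mathcal O_X)$ --- here the collapse $J\otimes_R\mathcal A_i=J\otimes_k\mathcal O$ is what forces $\mathcal O_X$-coefficients --- is independent of the $a'_{ijk}$ and vanishes iff they can be chosen to satisfy the twisted cocycle condition. \emph{Step 4.} Once $\xi^{(0,3)}=0$, choose the $a'_{ijk}$ so that the twisted cocycle condition holds; then $\phi'_{ij}\phi'_{jk}(\phi'_{ik})^{-1}$ and $\mathrm{Ad}(a'_{ijk})$ are automorphisms of $\mathcal A'_i|_{U_{ijk}}$ reducing modulo $J$ to $\mathrm{Ad}(a_{ijk})$, and their ratio is $\mathrm{id}+\theta_{ijk}$ with $\theta_{ijk}$ a $k$-derivation valued in $J\otimes\mathcal O$, i.e.\ $\theta_{ijk}\in J\otimes\Gamma(U_{ijk},T_X)$. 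Using the twisted cocycle condition on the $a'_{ijk}$ (this is precisely why Step 3 must come first) and associativity of composition of the $\phi'$, one checks $\{\theta_{ijk}\}$ is a \v{C}ech $2$-cocycle; its class $\xi^{(1,2)}\in J\otimes H^2(X,T_X)$ is independent of the remaining choices, and its vanishing lets one adjust the $\phi'_{ij}$ by derivations --- an adjustment invisible to the $a'$-cocycle condition because $J$ is killed by $\mathfrak m_{R'}$ --- so that $(\mathcal A'_i,\phi'_{ij},a'_{ijk})$ is a twisted NC deformation $\mathcal A'$ over $R'$ inducing $\mathcal A$. This proves (1); conversely, if such an $\mathcal A'$ exists then restricting it and comparing with its own defining data makes all four classes vanish.

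For (2), when all obstructions vanish the set of isomorphism classes of extensions is nonempty by (1) and, by a cocycle computation parallel to Steps 1--4, is a torsor under $J\otimes HH^2(X)$: re-choosing the $\mathcal A'_i$ contributes $H^0(X,\bigwedge^2 T_X)$, re-choosing the $\phi'_{ij}$ contributes $H^1(X,T_X)$, and re-choosing the $a'_{ijk}$ contributes $H^2(X,\mathcal O_X)$, the \v{C}ech degrees being forced as before and the three summands assembling to $HH^2(X)$ by Hochschild--Kostant--Rosenberg; fixing the basepoint from (1) turns this torsor into the asserted bijection. For (3), every class above is produced by a construction --- the Hochschild obstruction and torsor classes, the passage to \v{C}ech cocycles, and the formation of $\mathrm{Ad}$ and of ratios of gluing maps --- that is natural for ring homomorphisms, and tensoring with the $k$-modules $J$ and $J_1$ commutes with all of these; choosing the auxiliary data for the bottom row as the images under $\beta,\beta',\beta_J$ of that for the top row (or invoking independence of the classes from such data) then shows $\beta_J$ carries each $\xi^{(a,b)}$ for the top row to the corresponding class for the bottom row.

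The real work lies in Steps 3--4. One must establish that the twisting data contributes its obstruction in $H^3(X,\mathcal O_X)$ rather than in some larger group built out of the $\mathcal A_i$ (this is the collapse $J\otimes_R\mathcal A_i=J\otimes_k\mathcal O$), and then that, after the $a'_{ijk}$ have been repaired into a twisted cocycle, the residual $\mathrm{Ad}$-discrepancy is a genuine \v{C}ech $2$-cocycle with values in $T_X$, not merely a cochain. One must also verify at each of the four stages that the class is independent of every choice made at earlier stages, and dually that adjusting an earlier choice to kill a later class does not revive an earlier obstruction; this is exactly where the hypothesis that $J$ is killed by $\mathfrak m_{R'}$ enters essentially, since it is what makes derivation-valued modifications of the $\phi'_{ij}$ invisible to the twisted cocycle condition and conjugation by $1+J\otimes\mathcal O$ trivial. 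Finally one needs the Hochschild obstruction formalism set up over the Artinian base $R$ with the displayed base-change identifications holding affine-locally.
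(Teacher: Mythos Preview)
Your outline is correct and follows essentially the same four-stage obstruction scheme as the paper, in the same order $\xi^{(3,0)},\xi^{(2,1)},\xi^{(0,3)},\xi^{(1,2)}$, with the same reason for placing $\xi^{(0,3)}$ before $\xi^{(1,2)}$ (the twisted cocycle identity on the $a'_{ijk}$ is what makes the derivation-valued $2$-cochain close). The identification of each piece via HKR, the torsor description in part~(2), and the naturality argument for part~(3) all match the paper.

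There is one technical discrepancy worth flagging. You present $\mathcal A$ as ``sheaves of flat $R$-algebras $\mathcal A_i$ on $U_i$'' with gluing isomorphisms on the intersections $U_{ij}$; the paper explicitly avoids this, because localization of a non-commutative ring need not exist, so there is no $\mathcal A_i\vert_{U_{ij}}$ to speak of. Instead the paper indexes the cover by a poset in which $i<j$ means $U_j\subset U_i$, the only transition maps are restriction homomorphisms $\phi_{ji}:\mathcal A_i\to\mathcal A_j$ along inclusions, and the \v Cech cochains $g_{ji},h_{kji},\sigma_{lkji}$ are therefore defined \emph{a priori} only on totally ordered tuples. The paper then uses a combinatorial device (its Lemma~\ref{S_n}, an alternating-sum formula over the symmetric group) to extend each closed cochain from ordered tuples to arbitrary tuples while preserving the cocycle condition; this is how one gets genuine \v Cech classes. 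Your sketch implicitly assumes the usual \v Cech complex on unordered intersections is available from the start, which in this NC setting it is not. The fix is purely bookkeeping and does not affect your overall architecture, but it is precisely the point where the paper does something beyond the commutative template, and your ``real work'' paragraph should include it alongside the closedness checks for $\sigma$ and $\theta$.
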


\begin{Thm}[= Theorem \ref{semi-universal'}]
Assume that $n = \dim T^1$ and $m = \dim T^2$ are finite.
Then there exists a semi-universal twisted NC deformation of $X$ over 
\[
R := k[[t_1,\dots,t_n]]/(f_1,\dots,f_m)
\]
for some formal power series $f_i$ of orders at least $2$ and possibly vanish.
\end{Thm}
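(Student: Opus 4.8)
The strategy is to apply Schlessinger's theorem \cite{Schlessinger} to the twisted NC deformation functor $F$ of $X$ --- which sends an Artin local $k$-algebra $B$ with residue field $k$ to the set of isomorphism classes of twisted NC deformations of $X$ over $B$ --- and then to read off the shape of the resulting hull from the tangent--obstruction theory of Theorem \ref{T1T2'}. First I would check Schlessinger's conditions $(H_1)$--$(H_3)$ for $F$, the non-twisted functor being a subfunctor treated in the same way: $(H_3)$ is the hypothesis $\dim_k T^1 = \dim_k F(k[\varepsilon]) < \infty$, while $(H_1)$ and $(H_2)$ amount to surjectivity of
\[
F(B' \times_B B'') \longrightarrow F(B') \times_{F(B)} F(B'')
\]
for a small extension $B'' \to B$ and an arbitrary map $B' \to B$, together with its bijectivity when $B' = k[\varepsilon]$, $B = k$. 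These hold because a twisted NC deformation over $B$ is by construction glued from NC deformations of a fixed affine cover and transition isomorphisms, so a deformation over $B' \times_B B''$ is the same as a compatible pair of such data over $B'$ and over $B''$; the bijectivity is the additivity $F(k[\varepsilon] \times_k k[\varepsilon]) \cong T^1 \oplus T^1$ visible from the cocycle description of first-order deformations. Schlessinger's theorem then provides a complete local $k$-algebra $R$ with residue field $k$ and a semi-universal formal twisted NC deformation $\hat{\mathcal A}$ over $R$: the map of functors $h_R \to F$ is smooth and an isomorphism on tangent spaces.

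Because $h_R \to F$ is an isomorphism on tangent spaces, $\dim_k \mathfrak m_R/\mathfrak m_R^2 = \dim_k T^1 = n$, so lifting a basis yields a surjection $\widehat S := k[[t_1,\dots,t_n]] \twoheadrightarrow R$ with kernel $I \subseteq \mathfrak m_{\widehat S}^2$. Since $\widehat S$ is Noetherian, $I$ is generated by $r := \dim_k I/\mathfrak m_{\widehat S}I$ power series $f_1,\dots,f_r$, of order $\ge 2$ because $I \subseteq \mathfrak m_{\widehat S}^2$. Padding with zeros, it suffices to prove $r \le m = \dim_k T^2$, for then $R = k[[t_1,\dots,t_n]]/(f_1,\dots,f_m)$ as claimed.

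To bound $r$, put $\bar S := \widehat S/\mathfrak m_{\widehat S}I$, so $0 \to J \to \bar S \to R \to 0$ with $J := I/\mathfrak m_{\widehat S}I$ killed by $\mathfrak m_{\bar S}$ and $\dim_k J = r$. Truncating $\bar S$ and $R$ modulo large powers of their maximal ideals turns this into a small extension of Artin local algebras, and applying Theorem \ref{T1T2'}(1) to $\hat{\mathcal A}$ and passing to the limit produces a total obstruction class $o \in J \otimes_k T^2 \cong \mathrm{Hom}_k(J^\vee, T^2)$ --- assembled from $\xi^{(3,0)}, \xi^{(2,1)}, \xi^{(1,2)}, \xi^{(0,3)}$ --- whose vanishing along a given quotient small extension of $\bar S \to R$ is equivalent to liftability of $\hat{\mathcal A}$ there. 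I claim the associated linear map $\widetilde o \colon J^\vee \to T^2$ is injective; then $r = \dim_k J^\vee \le \dim_k T^2 = m$ and we are done. If instead $\widetilde o(\ell) = 0$ for some $0 \ne \ell \in J^\vee$, take the ideal $I'$ with $\mathfrak m_{\widehat S}I \subseteq I' \subseteq I$ and $I'/\mathfrak m_{\widehat S}I = \ker\ell$, giving a small extension $0 \to \bar J \to \widehat S/I' \to R \to 0$ with $\bar J := I/I' \cong k$. By the functoriality of the obstruction classes (Theorem \ref{T1T2'}(3)), applied to the morphism $(\bar S \to R) \to (\widehat S/I' \to R)$ over $\mathrm{id}_R$, the obstruction to lifting $\hat{\mathcal A}$ over $\widehat S/I'$ is the image of $o$ under $J \otimes T^2 \to \bar J \otimes T^2 \cong T^2$, that is $\widetilde o(\ell) = 0$; hence $\hat{\mathcal A}$ lifts over $\widehat S/I'$. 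Smoothness of $h_R \to F$ then yields a continuous $k$-algebra section of $\widehat S/I' \to R$. But such a section exhibits $\widehat S/I'$ as the trivial square-zero extension of $R$ by $\bar J$, whence $\mathfrak m_{\widehat S/I'}^2$ lies in the image of $R$ and $\bar J \cap \mathfrak m_{\widehat S/I'}^2 = 0$; since $I \subseteq \mathfrak m_{\widehat S}^2$ forces $\bar J \subseteq \mathfrak m_{\widehat S/I'}^2$, this gives $\bar J = 0$, contradicting $\bar J \cong k$. Thus $\widetilde o$ is injective, $r \le m$, and $R = k[[t_1,\dots,t_n]]/(f_1,\dots,f_m)$ with each $f_i$ of order $\ge 2$ or zero.

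The step I expect to be the main obstacle is reconciling Theorem \ref{T1T2'}, stated for small extensions of Artin local algebras, with the maps $\bar S \to R$ and $\widehat S/I' \to R$ of complete non-Artinian rings: one must truncate modulo $\mathfrak m^i$ for $i$ large enough that $J$ (respectively $\bar J$) injects into the truncation, check that the truncated surjections are genuinely small extensions, and verify that the obstruction classes and their functoriality are compatible with the inverse limit over $i$, so that liftability of the formal $\hat{\mathcal A}$ and the existence of a section of complete rings are correctly deduced from the Artinian statements. Verifying $(H_1)$ and $(H_2)$ is conceptually routine but needs the careful bookkeeping of the NC deformation data and transition isomorphisms alluded to above.
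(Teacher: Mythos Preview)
Your strategy matches the paper's: check Schlessinger's conditions (the paper does this as Proposition~\ref{Schlessinger condiiton} and Lemma~\ref{glue}) to obtain a hull $R$, then use the obstruction theory of Theorem~\ref{T1T2'} to bound the minimal number of relations by $m = \dim T^2$. The paper itself leaves this second step entirely to references (\cite{NC base}~Theorem~4.1, \cite{Sernesi}), so you are filling in details the paper does not spell out.

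There is, however, a genuine gap where you ``assemble'' a single total obstruction class $o \in J \otimes T^2$. Theorem~\ref{T1T2'}(1) does not provide such a class: the obstructions $\xi^{(3,0)}, \xi^{(2,1)}, \xi^{(0,3)}, \xi^{(1,2)}$ are defined \emph{sequentially}, each only when its predecessors vanish, and the functoriality in part~(3) is asserted only ``as long as they are defined.'' If $\xi^{(3,0)} \neq 0$ for the extension $\bar S \to R$, the later classes simply do not exist over $\bar S$, so your linear map $\tilde o \colon J^\vee \to T^2$ is not well-defined and the injectivity argument cannot be run as written. The repair is a filtration: let $J^{(1)} \subseteq J$ be the minimal subspace with $\xi^{(3,0)} \in J^{(1)} \otimes H^0(\bigwedge^3 T_X)$, pass to the small extension $\bar S/J^{(1)} \to R$ (over which the image of $\xi^{(3,0)}$ vanishes by functoriality, so $\xi^{(2,1)} \in (J/J^{(1)}) \otimes H^1(\bigwedge^2 T_X)$ is now defined), and iterate to obtain $0 \subseteq J^{(1)} \subseteq \dots \subseteq J^{(4)} \subseteq J$ with each successive quotient injecting into the corresponding summand of $T^2$. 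Over $\bar S/J^{(4)}$ all four obstructions vanish, so $\hat{\mathcal A}$ lifts there, and your versality-section contradiction then forces $J^{(4)} = J$; summing dimensions gives $r = \dim J \le m$. This replaces the single injective $\tilde o$ you wanted by a chain of injections on the graded pieces, which is what the sequential form of Theorem~\ref{T1T2'} actually supports.
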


A general theory of infinitesimal deformations of abelian categories such as $\text{coh}(X)$ 
are developed in \cite{Lowen-VdBergh}.
The tangent space and the obstruction space are identified as Hochschild cohomologies 
$HH^2(X)$ and $HH^3(X)$.
We note that there is Hochschild-Kostant-Rosenberg isomorphism
\[
HH^n(X) \cong \bigoplus_{p+q=n} H^q(X, \bigwedge^p T_X)
\]
in the case of a smooth projective variety.
On the other hand, \cite{Toda} considered NC deformations of smooth projective varieties over the ring $k[t]/(t^2)$ explicitly.
Our result is its generalization to higher order deformations.
We use explicit gluing of local charts in a similar way to Kodaira.  

\vskip 1pc

The outline of the proof is as follows.
We start with the non-twisted version in \S 2.
We cover the variety $X$ by affine open subsets $U_i = \text{Spec}(A_i)$, 
deform the coordinate rings $A_i$ by changing the multiplications, then glue them.
The gluing is done only by homomorphisms between NC deformed coordinate rings $\mathcal A_i \to \mathcal A_j$
which are related by inclusions $U_j \subset U_i$, because localization is 
not possible in general due to the non-commutativity of the associative rings $\mathcal A_i$.
We cannot use a sheaf for gluing.

The associativity of the multiplication, the compatibility of the multiplication with the gluing, and the transitivity 
of the gluing are translated to the vanishing of certain \v Cech cocycles of Hochschild cocycles.
Namely they correspond to
a \v Cech $0$-cocycle of Hochschild $3$-cocycles, a \v Cech $1$-cocycle of Hochschild $2$-cocycles, and
a \v Cech $2$-cocycle of Hochschild $1$-cocycles, respectively.
They are first defined only for configurations of open subsets which are related by inclusions.
Moreover we need to consider Hochschild cocycles with coefficients.
For example, we consider a Hochschild cocycle on $A_i$ with coefficients in $J \otimes A_j$ for $U_j \subset U_i$, and 
a \v Cech $2$-cocycle is defined on the overlap $U_i \cap U_j \cap U_k$ only in the case where
$U_k \subset U_j \subset U_i$.
Then we extend the \v Cech cocycles for general triple $U_i,U_j,U_k$, which are unrelated by inclusions, using some trick
(Lemma \ref{S_n}).

In \S 3, we extend the results to the twisted version.
We consider modules twisted by invertible functions with respect to the transitivity of open subsets $U_k \subset U_j \subset U_i$.
Since the twisting functions do not necessarily belong to the centers of the function algebras, we need to require that the 
function algebras are also twisted by the adjoint action of the twisting functions.
We introduce the equivalence condition of the twistings which is compatible with the formation of the category of twisted modules. 
The compatibility condition of the twisting with gluing yields a \v Cech $3$-cocycle of 
Hochschild $0$-cocycles, i.e., functions.
We will explain the necessary changes for the generalization from the non-twisted version to the twisted version.

The author would like to thank Alexey Bondal, Anya Nordskova, Shinnosuke Okawa and Michel Van den Bergh for 
suggestions and encouragements.
He would also like to thank NCTS of National Taiwan University where 
the work was partly done while the author visited there.
This work is partly supported by JSPS Kakenhi 21H00970.

\section{NC deformations of smooth varieties}

We consider NC deformations without twisting in this section.
This simplifies the notation and arguments.
The twisted case will be treated in the next section. 

We define a concept of an NC scheme considered in this paper:

\begin{Defn}
An {\em NC scheme} $\mathcal A$ consists of the following data:

\begin{enumerate}
\item A poset $I$ on which the maximum $\max\{i,j\}$ exists for any two elements $i,j \in I$;

\item A set of associative algebras $\mathcal A_i$ parametrized by $i \in I$.

\item Algebra homomorphisms $\phi_{ji}: \mathcal A_i \to \mathcal A_j$ for 
any pair $i < j$
(we set $\phi_{ii} = \text{Id}_{\mathcal A_i}$), which are transitive, i.e., 
$\phi_{kj}\phi_{ji}(x) = \phi_{ki}(x)$ for $i < j < k$ and $x \in \mathcal A_i$.
\end{enumerate}
\end{Defn}

We usually assume the following condition (flatness and the birationality), which will be useful 
when we consider coherent sheaves on NC schemes (cf. \cite{DVLL}):

\begin{itemize}
\item $\phi_{ji}$ is two-sided flat, and the natural map $\mathcal A_j \to \mathcal A_j \otimes_{\mathcal A_i} \mathcal A_j$ 
is bijective.
\end{itemize}

An algebraic variety $X$ defined over a field $k$ is an example of an NC scheme.
$X$ has an open covering by affine open subsets $U_i = \text{Spec }A_i$ for $i \in I$.
We write $i < j$ if $U_i \supset U_j$.
We assume that, if $i,j \in I$, then $U_i \cap U_j = U_k$ for some $k \in I$.
We have restriction homomorphisms $\phi^A_{ji}: A_i \to A_j$ for $i < j$, which is flat.
Since $A_i$ and $A_j$ are birationally equivalent, the natural map 
$A_j \to  A_j \otimes_{A_i} A_j$ is bijective.

\vskip 1pc

Let $(\text{Art})$ be the category of Artin local $k$-algebras $(R,M)$ such that $R/M \cong k$.
We can write $R = k[[ t_1,\dots,t_n]]/I$ for some integer $n$, where $I$ is an ideal of 
a formal power series ring.
$R$ is finite dimensional as a $k$-module, 
and the maximal ideal is given by $M = (t_1,\dots,t_n)$. 

\begin{Defn}
Let $X$ be an algebraic variety defined over a field $k$ with a fixed covering 
by affine open subsets $U_i = \text{Spec }A_i$ for $i \in I$ as above.
An {\em NC deformation} of $X$ over $(R, M)$ is defined to be
a pair $(\mathcal A, \alpha)$ which satisfies the following conditions:
 
\begin{enumerate}

\item $\mathcal A$ is an NC scheme consisting of associative $R$-algebras $\mathcal A_i$ for $i \in I$ 
which are flat over $R$, and a collection of $R$-algebra homomorphisms
$\phi^{\mathcal A}_{ji}: \mathcal A_i \to \mathcal A_j$ for $i < j$ which are transitive, i.e., 
$\phi^{\mathcal A}_{kj} \circ \phi^{\mathcal A}_{ji} = \phi^{\mathcal A}_{ki}$ for $i < j < k$.
We set $\phi^{\mathcal A}_{ii} = \text{Id}_{\mathcal A_i}$ for $i = j$.

\item $\alpha$ is a collection of $k$-algebra isomorphisms $\alpha_i: k \otimes_R \mathcal A_i \to A_i$, 
which is compatible with the $\phi^{\mathcal A}_{ji}$, i.e., 
$\alpha_j \circ (k \otimes \phi^{\mathcal A}_{ji}) = \phi^A_{ji} \circ \alpha_i$.
\end{enumerate}
\end{Defn}

We do not need to assume the flatness and the birationality condition on gluing homomorphisms $\phi^{\mathcal A}_{ji}$
in the case of NC deformations of algebraic varieties, because they are automatic:

\begin{Prop}\label{flat birational}
Let $X$ be an algebraic variety over a field $k$ with a fixed open affine covering $U_i = \text{Spec}(A_i)$ as before.
Let $\mathcal A$ be an NC deformation of $X$ over $R \in (\text{Art})$.
Then the following hold.

(1) $\mathcal A_j$ is right and left flat over $\mathcal A_i$ for $i < j$. 

(2) The natural $\mathcal A_j$-module homomorphism $\mathcal A_j \to \mathcal A_j \otimes_{\mathcal A_i} \mathcal A_j$ 
is bijective (a birationality condition). 
\end{Prop}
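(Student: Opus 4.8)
The plan is to reduce both statements to the commutative situation over $X$ by an induction on the length of $R$, using the fact that everything is flat over $R$ and that modulo the maximal ideal we recover the variety $X$. Concretely, I would filter $R$ by powers of its maximal ideal $M$, or more conveniently choose a chain of ideals $R = M^0 \supset M^{a} \supset \dots \supset (0)$ realizing $R$ as an iterated small extension; at each step one has a short exact sequence $0 \to J \to R' \to R \to 0$ with $J$ a $k$-vector space killed by the maximal ideal of $R'$, and $\mathcal A_i' = \mathcal A_i \otimes_{R'} R'$ sitting over $\mathcal A_i = \mathcal A_i' \otimes_{R'} R$. Flatness of $\mathcal A_i$ over $R$ then gives, for each step, an exact sequence $0 \to J \otimes_k A_i \to \mathcal A_i' \to \mathcal A_i \to 0$ of $\mathcal A_i'$-bimodules (using $J \otimes_{R'} \mathcal A_i' \cong J \otimes_k (k \otimes_R \mathcal A_i) \cong J \otimes_k A_i$ via $\alpha_i$), and similarly for $\mathcal A_j'$.

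The first step is the base case: when $R = k$ we have $\mathcal A_i = A_i$, $\mathcal A_j = A_j$ and $\phi^{\mathcal A}_{ji} = \phi^A_{ji}$, so (1) is flatness of the restriction $A_i \to A_j$ for an inclusion of affines $U_j \subset U_i$ (which holds because $U_j \to U_i$ is an open immersion, hence $A_j$ is a localization-type flat $A_i$-algebra), and (2) is the birationality $A_j \otimes_{A_i} A_j \cong A_j$ noted already in the text. The second step is the inductive step for flatness: assuming $\mathcal A_j$ is flat over $\mathcal A_i$, I want to deduce $\mathcal A_j'$ is flat over $\mathcal A_i'$. Here I would use the local criterion for flatness relative to the nilpotent thickening $\mathcal A_i' \to \mathcal A_i$: since $\mathcal A_j'$ is flat over $R'$, it is in particular $R'$-flat, and the obstruction to $\mathcal A_i'$-flatness is measured by $\mathrm{Tor}_1^{\mathcal A_i'}(\mathcal A_i, \mathcal A_j')$; the bimodule exact sequence above together with $R'$-flatness of $\mathcal A_j'$ identifies this Tor with $\mathrm{Tor}_1^{A_i}(A_i, J \otimes_k A_j)$-type terms, which vanish because $A_j$ is already flat over $A_i$. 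One runs the symmetric argument on the left. The third step handles birationality: tensoring the sequence $0 \to J\otimes_k A_j \to \mathcal A_j' \to \mathcal A_j \to 0$ over $\mathcal A_i'$ with $\mathcal A_j'$ on the appropriate side, using the flatness just established so that tensoring is exact, and comparing with the same procedure for $\mathcal A_j$, reduces the claim $\mathcal A_j' \to \mathcal A_j' \otimes_{\mathcal A_i'} \mathcal A_j'$ bijective to the already-known cases over $R$ and over $k$ by the five lemma (the kernel/cokernel of the map fit into a diagram whose outer terms vanish by induction and by the commutative birationality statement).

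The main obstacle I expect is bookkeeping with the two-sided (bimodule) structure: flatness must be proved separately on the left and on the right, and the tensor products $\mathcal A_j \otimes_{\mathcal A_i} \mathcal A_j$ are of bimodules over non-commutative rings, so one must be careful that the identifications $J \otimes_{R'}(-) \cong J \otimes_k A_{(-)}$ are genuinely bimodule identifications and that the local flatness criterion is being applied in a form valid for (possibly non-commutative, non-Noetherian in a naive sense, but Artinian-over-$R$) rings — the cleanest route is to observe that $R'$ is Artinian, everything in sight is a finite module situation after base change, and the relevant Tor vanishing can be checked after the faithfully flat (indeed, just "detecting") passage to the associated graded / to $k$, so no delicate non-commutative homological subtleties actually arise. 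A secondary point to get right is that the filtration of $R$ by small extensions is finite (since $R$ is Artin local) so the induction terminates, and that the isomorphisms $\alpha_i$ are used consistently to trivialize $k \otimes_R \mathcal A_i$ compatibly with the gluing maps $\phi^{\mathcal A}_{ji}$.
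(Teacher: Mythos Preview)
Your proposal is correct, and for part~(2) it is essentially identical to the paper's argument: induction on $\dim R$ via small extensions, together with the five lemma applied to the diagram comparing $\mathcal A'_j \to \mathcal A'_j \otimes_{\mathcal A'_i} \mathcal A'_j$ with its reductions.

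For part~(1), however, you take a genuinely different route. You argue by induction on the length of $R$, invoking a local flatness criterion at each small extension step: $\mathcal A'_j$ is $\mathcal A'_i$-flat because $\mathcal A_j$ is $\mathcal A_i$-flat (inductive hypothesis) and $\mathrm{Tor}_1^{\mathcal A'_i}(\mathcal A_i,\mathcal A'_j)=0$, the latter vanishing because the map $(J\otimes_k A_i)\otimes_{\mathcal A'_i}\mathcal A'_j \cong J\otimes_k A_j \hookrightarrow \mathcal A'_j$ is injective by $R'$-flatness. (Your phrase ``$\mathrm{Tor}_1^{A_i}(A_i,J\otimes_k A_j)$-type terms'' is imprecise, since that Tor is trivially zero; what you actually need is the injectivity just stated.) The paper instead gives a direct, induction-free argument: reduce to testing against $A_i$-modules $M$ (any $\mathcal A_i$-module has a finite filtration with $A_i$-module subquotients since $\dim_k R<\infty$), take a free $\mathcal A_i$-resolution $\mathcal P_\bullet\to\mathcal A_j$, observe that $R$-flatness makes $k\otimes_R\mathcal P_\bullet\to A_j$ a free $A_i$-resolution, and then note that $\mathcal P_\bullet\otimes_{\mathcal A_i}M$ coincides with $P_\bullet\otimes_{A_i}M$, which is exact by the classical flatness of $A_j$ over $A_i$. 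The paper's approach is shorter and sidesteps the need to verify the local flatness criterion in the non-commutative setting; your approach has the virtue of uniformity, treating (1) and (2) by the same inductive mechanism.
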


\begin{proof}
(1) We will prove the right flatness.
The left flatness is similarly proved.
We need to prove that $\text{Tor}_m^{\mathcal A_i}(\mathcal A_j, M) \cong 0$ for any left 
$\mathcal A_i$-module $M$ and $m > 0$.
Since $\dim R < \infty$, we may assume that $M$ is an $A_i$-module.

Let $\mathcal P_{\bullet} \to \mathcal A_j$ with $\mathcal P_m = \mathcal A_i^{(I_m)}$ for $m \ge 0$ be a resolution
by free $\mathcal A_i$-modules.
Since $\mathcal A_j$ and $\mathcal A_i$ are flat over $R$, the exactness is preserved under the base change $k \otimes_R$; 
$P_{\bullet} \to A_j$ with $P_m = A_i^{(I_m)}$ is also a resolution by free $A_i$-modules. 
Since $A_j$ is flat over $A_i$, we have $\text{Tor}_m^{A_i}(A_j, M) \cong 0$ for $m > 0$.
Hence $P_{\bullet} \otimes_{A_i} M \to A_j \otimes_{A_i} M$ is a resolution.
We have 
\[
\mathcal A_j \otimes_{\mathcal A_i} M \cong A_j \otimes_{A_i} M
\]
Hence $\mathcal P_{\bullet} \otimes_{\mathcal A_i} M \to \mathcal A_j \otimes_{\mathcal A_i} M$ is also a resolution.
Terefore $\text{Tor}_m^{\mathcal A_i}(\mathcal A_j, M) \cong 0$ for $m > 0$.

\vskip 1pc

(2) We proceed by induction on the dimension of the base ring $\dim R$.
Assuming that a deformation $\mathcal A$ over $R$ satisfies the birationality condition, 
we will prove that an extension $\mathcal A'$ of $\mathcal A$ over a small extension $R'$ of $R$ also satisfies it.

The first step is that, since the natural homomorphism $A_i \to A_j$ is birational, 
$A_j = A_i \otimes_{A_i} A_j \to A_j \otimes_{A_i} A_j$ is bijective.

Our homomorphism is derived from $\mathcal A_i \to \mathcal A_j$
by $\mathcal A_j = \mathcal A_i \otimes_{\mathcal A_i} \mathcal A_j$.
We have also $\mathcal A'_j \otimes_{\mathcal A'_i} \mathcal A_j \cong \mathcal A_j \otimes_{\mathcal A_i} \mathcal A_j$
and $\mathcal A'_j \otimes_{\mathcal A'_i} A_j \cong A_j \otimes_{A_i} A_j$.
Therefore we have a commutative diagram of exact sequences
\[
\begin{CD}
0 @>>> J \otimes A_j @>>> \mathcal A'_j @>>> \mathcal A_j @>>> 0 \\
@. @VVV @VVV @VVV \\
0 @>>> J \otimes A_j \otimes_{A_i} A_j @>>> \mathcal A'_j \otimes_{\mathcal A'_i} \mathcal A'_j 
@>>> \mathcal A_j \otimes_{\mathcal A_i} \mathcal A_j @>>> 0.
\end{CD}
\]
Since the vertical arrows on both sides are bijective, so is the middle. 
\end{proof}

Let $(R', M') \in (\text{Art})$ be another Artin local $k$-algebra and let $R \to R'$ be a local $k$-algebra homomorphism.
Then an NC deformation 
$\mathcal A = (\mathcal A, \alpha)$ of $X$ over $R$ induces an NC deformation 
$\mathcal A' = (\mathcal A', \alpha')$ over $R'$ by 
$\mathcal A'_i = R' \otimes_R \mathcal A_i$ for all $i$.
Thus the NC deformations of $X$ determines a functor $\text{Def}_X: (\text{Art}) \to (\text{Set})$ 
given by $\text{Def}_X(R) = \{(\mathcal A, \alpha)\}/\sim$, where $\sim$ denotes an isomorphism.

\vskip 1pc

In order to describe the deformation functor using $T^1$ and $T^2$, we recall Hochschild cohomologies (cf. \cite{W}).
Let $A$ be an associative $k$-algebra and let $M$ be an $A$-bimodule.
The Hochschild cohomology $HH^*(A,M)$ of $A$ with coefficients in $M$ is the cohomology of the cochain complex
\[
\begin{split}
&0 \to M \to \text{Hom}_k(A, M) \to \dots \\
&\to \text{Hom}_k(A^{\otimes p}, M) \to \text{Hom}_k(A^{\otimes (p+1)}, M) \to \dots
\end{split}
\]
where $A^{\otimes p} = A \otimes_k \dots \otimes_k A$ ($p$-times) and the coboundary is given by
\[
\begin{split}
&df(x_1, \dots, x_p) 
= x_1f(x_2,\dots,x_p) \\&+ \sum_{i = 1}^{p-1} (-1)^i f(x_1, \dots, x_ix_{i+1}, \dots, x_p)
+ (-1)^p f(x_1, \dots, x_{p-1})x_p.
\end{split}
\]

The Hochschild cohomology is particularly simple when $A$ is a commutative and $X = \text{Spec }A$ is smooth.
Indeed if $M$ is a flat $A$-module, then we have 
$HH^p(A,M) \cong \bigwedge^p T_X \otimes_A M$, because there is a quasi-isomorphism
\[
\begin{CD}
@>>> A \otimes A^{\otimes p} @>d>> A \otimes A^{\otimes (p-1)} \dots @>d>> A \otimes A @>0>> A @>>> 0 \\
@. @V{I^p}VV @V{I^{p-1}}VV @V{I^1}VV @V{I^0}VV \\
@>>> \Omega_A^p @>0>> \Omega_A^{p-1} \dots @>0>> \Omega^1_A @>0>> A @>>> 0
\end{CD}
\]
given by 
\[
I^p(x_0 \otimes x_1 \otimes \dots \otimes x_p) = 
x_0 dx_1 \wedge \dots \wedge dx_p.
\]
Here we used $\text{Hom}_k(A^{\otimes p}, M) \cong \text{Hom}_A(A \otimes_k A^{\otimes p}, M)$.

We note that the last differential of the Hochschild complex vanishes because $A$ is commutative.
This is the reason why the factor $H^n(X, \mathcal O_X)$ of $HH(X)$ is special.

\vskip 1pc

We consider deformations of a smooth variety $X$ over a field $k$ in this paper.
We fix an open covering $\{U_i\}_{i \in I}$ as above.

We consider the extension problem of NC deformations.
Let 
\begin{equation}\label{ext}
0 \to J \to R' \to R \to 0
\end{equation}
be an extension in $(\text{Art})$, where 
$J$ is an ideal such that $M'J = 0$.
We ask whether a given NC deformation $\mathcal A \in \text{Def}_X(R)$ extends to a deformation
$\mathcal A' \in \text{Def}_X(R')$ such that $\mathcal A = R \otimes_{R'} \mathcal A'$, and 
when it is extendible, then how is the set of extensions parametrized.

\begin{Thm}\label{T1T2}
Let $(R,M), (R',M') \in (\text{Art})$ and $J$ be as in (\ref{ext}), and
let $\mathcal A$ be an NC deformation of a smooth variety $X$ over $(R,M)$.
Then the following hold.

(1) There is a class $\xi^{(3,0)} \in J \otimes H^0(X, \bigwedge^3 T_X)$.
If $\xi^{(3,0)} = 0$, then there is a class $\xi^{(2,1)} \in J \otimes H^1(X, \bigwedge^2 T_X)$.
If $\xi^{(2,1)} = 0$, then there is a class $\xi^{(1,2)} \in J \otimes H^2(X, T_X)$.
And $\xi^{(3,0)} = \xi^{(2,1)} = \xi^{(1,2)} = 0$ hold if and only if there exists an NC deformation $\mathcal A'$
over $(R',M')$ which induces $\mathcal A$ over $(R,M)$.

(2) Assume that $\xi^{(3,0)} = \xi^{(2,1)} = \xi^{(1,2)} = 0$.
Then the set of isomorphism classes of all extensions $\mathcal A'$ 
corresponds bijectively to a $k$-vector space $J \otimes (H^0(X, \bigwedge^2 T_X) \oplus H^1(X, T_X))$.

(3) The obstruction classes $\xi^{(3,0)}, \xi^{(2,1)}, \xi^{(1,2)}$ are functorially determined in the following sense.
Let 
\begin{equation}\label{functorial}
\begin{CD}
0 @>>> J @>>> R' @>>> R @>>> 0 \\
@. @V{\beta_J}VV @V{\beta'}VV @V{\beta}VV \\
0 @>>> J_1 @>>> R'_1 @>>> R_1 @>>> 0
\end{CD}
\end{equation}
be a commutative diagram of extensions as in (\ref{ext}).
Let $\mathcal A \in \text{Def}_X(R)$ 
and let $\mathcal A^{(1)} = \text{Def}_X(\beta)(\mathcal A) \in \text{Def}_X(R_1)$.
Let $\xi^{(p,q)} \in J \otimes H^q(\bigwedge^p T_X)$ and 
$(\xi^{(1)})^{(p,q)} \in J_1 \otimes H^q(\bigwedge^p T_X)$ be the 
obstruction classes of extending $\mathcal A$ and $\mathcal A^{(1)}$ over $R'$ and $R'_1$, respectively.
Then $(\xi^{(1)})^{(p,q)} = \beta_J(\xi^{(p,q)})$ hold as long as they are defined.
\end{Thm}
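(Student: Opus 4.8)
The plan is to follow Kawamata's own roadmap: fix the affine covering $\{U_i = \operatorname{Spec} A_i\}_{i\in I}$, choose for each $i$ an $R'$-module splitting $\mathcal A'_i \cong \mathcal A_i \oplus (J\otimes A_i)$ of the underlying modules (possible because $R' \to R$ is surjective with kernel $J$ killed by $M'$, so $J\otimes_k(-)$ is the relevant coefficient module), and attempt to extend the multiplications and the gluing maps $\phi^{\mathcal A}_{ji}$ to this split $R'$-module. Writing the multiplication on $\mathcal A'_i$ as $m_i + \mu_i$ with $\mu_i$ a $k$-bilinear map $A_i\times A_i \to J\otimes A_i$, associativity of $m_i+\mu_i$ is equivalent to $d\mu_i$ (Hochschild coboundary) being a fixed $3$-cochain built from lower-order data; since $\mathcal A$ is already an associative deformation over $R$, this $3$-cochain is in fact a Hochschild $3$-\emph{cocycle} on $A_i$ with coefficients in $J\otimes A_i$, and by the HKR computation recalled in the excerpt, $HH^3(A_i, J\otimes A_i)\cong J\otimes H^0(U_i,\bigwedge^3 T_{U_i})$; choosing the $\mu_i$ amounts to killing the local cohomology class. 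Similarly, once the $\mu_i$ are fixed, extending $\phi^{\mathcal A}_{ji}$ to an $R'$-algebra map $\mathcal A'_i\to\mathcal A'_j$ introduces a correction $\psi_{ji}: A_i\to J\otimes A_j$ whose failure to be an algebra map is measured by a Hochschild $2$-cocycle on $A_i$ with coefficients in $J\otimes A_j$, i.e. by a class in $HH^2(A_i,J\otimes A_j)\cong J\otimes H^1(U_i, T_{U_i})$ localized on $U_j\subset U_i$; and transitivity $\phi'_{kj}\phi'_{ji} = \phi'_{ki}$ forces a $1$-cocycle with values in $HH^1 \cong$ vector fields.

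The key steps, in order, would be: (i) interpret the three a priori obstructions as \v Cech cochains on $\{U_i\}$ valued in sheaves of Hochschild cochains — a \v Cech $0$-cochain of local $HH^3$-classes (associativity), a \v Cech $1$-cochain of local $HH^2$-classes (compatibility of multiplication with gluing), and a \v Cech $2$-cochain of local $HH^1$-classes (transitivity of gluing); (ii) show that the leading ($HH^3$) class is a global section of $\bigwedge^3 T_X$ over $J$, define $\xi^{(3,0)}$ as that section, and observe that its vanishing lets one choose the $\mu_i$ so that the next layer — now a \v Cech $1$-cocycle of local $HH^2 \cong T^1_{\text{loc}}$ data — is defined; (iii) push this through the \v Cech–to–derived-functor / hypercohomology spectral sequence for the double complex $\bigoplus_p \check C^\bullet(\{U_i\}, \underline{HH^p})$, so that the successive obstruction classes $\xi^{(3,0)},\xi^{(2,1)},\xi^{(1,2)}$ are precisely the differentials one meets while trying to correct a \v Cech cochain to a cocycle and then to a coboundary, landing in $J\otimes H^0(\bigwedge^3 T_X)$, then $J\otimes H^1(\bigwedge^2 T_X)$, then $J\otimes H^2(T_X)$; (iv) for part (2), once all three vanish the remaining freedom in $(\mu_i,\psi_{ji})$ modulo isomorphism is exactly the kernel of the first differential, i.e. $J\otimes(H^0(\bigwedge^2 T_X)\oplus H^1(T_X))$ — this is a standard torsor/affine-space argument once the correct complex is in place; (v) for part (3), naturality is automatic because every cochain above is constructed functorially from $\mathcal A$ and the base-change maps, and $\beta_J\colon J\to J_1$ induces the obvious map on all coefficient modules, so one just has to check that each definitional choice (splittings, corrections) can be made compatibly with the vertical arrows, or that the final classes are independent of those choices.

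The genuine technical obstacle — flagged by Kawamata himself via the reference to Lemma \ref{S_n} — is that the \v Cech cochains above are initially defined only on \emph{chains of inclusions} $U_k\subset U_j\subset U_i$, not on arbitrary triple overlaps, because one cannot localize the noncommutative rings $\mathcal A_i$: there is no honest restriction $\mathcal A_i \to \mathcal A_{U_i\cap U_j}$ beyond what the poset structure provides. So the double complex I want to run the spectral sequence on is not the naive \v Cech complex; one must first show that a cochain defined on inclusion-chains extends (uniquely up to coboundary) to a genuine \v Cech cochain computing the same cohomology. The plan is to invoke Lemma \ref{S_n} for exactly this: it should say that the subcomplex built from configurations related by inclusions is quasi-isomorphic to the full \v Cech complex (or that the relevant cohomology is unchanged), which is what legitimizes identifying the obstruction spaces with honest sheaf cohomology $H^q(X,\bigwedge^p T_X)$. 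After that input the argument is bookkeeping: carefully tracking signs in the total differential of the double complex, checking that the three obstruction classes are well defined independently of the splittings and local trivializations, and verifying the iso-class count in (2). I expect the sign/compatibility bookkeeping across the two gradings (\v Cech degree and Hochschild degree) to be the most error-prone part, and the extension-of-cochains step (Lemma \ref{S_n}) to be the real conceptual crux.
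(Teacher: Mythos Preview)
Your plan matches the paper's proof: set up the defect cochains $f_i$, $g_{ji}$, $h_{kji}$, show via direct Hochschild and \v Cech calculations (the paper's Lemmas \ref{df} and \ref{f'}) that each is closed and well-defined up to the relevant coboundaries, pass to HKR classes in $\bigwedge^\bullet T_X$, and invoke Lemma \ref{S_n} to promote the cochains from ordered inclusion-chains to arbitrary index tuples. Two corrections to your write-up: first, $HH^2(A_i, J\otimes A_j)\cong J\otimes\Gamma(U_j,\bigwedge^2 T_X)$, not $J\otimes H^1(U_i,T_{U_i})$ --- the $H^1(\bigwedge^2 T_X)$ appears only after you assemble these local $HH^2$-classes into a \v Cech $1$-cocycle $\{v(g_{ji})\}$; second, the paper does not (and cannot quite) run the spectral sequence you describe, because the $g_{ji}$ are Hochschild cochains on $A_i$ with coefficients in $J\otimes A_j$ rather than sections of a sheaf on $U_i\cap U_j$, so there is no honest double complex to feed into hypercohomology --- the argument is the explicit layer-by-layer computation you already sketch, and Lemma \ref{S_n} is a concrete antisymmetrization formula over $S_n$ that extends a cocycle from ordered chains to all tuples, not a quasi-isomorphism statement.
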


\begin{proof}
(1) The flatness of $\mathcal A_i$ over $R$ implies that
$\mathcal A_i \cong R \otimes_k A_i$ when they are considered as $R$-modules.
Let us define $\mathcal A'_i = R' \otimes_k A_i$ as an $R'$-module.
We have $R \otimes_{R'} \mathcal A'_i \cong \mathcal A_i$ as $R$-modules.
In order to define $\mathcal A'$, we need to define multiplications on the $\mathcal A'_i$ and their gluing homomorphisms.

The $\mathcal A_i$ has an associative multiplication $\times_{R_i}$.
Let $\times_{R'_i}$ be any $R'_i$-bilinear multiplication on $\mathcal A'_i$ which induce $\times_{R_i}$.
We do not require that they are associative nor compatible with multiplications $\times_{R'_j}$ for other $j$.
For $x,y,z \in \mathcal A'_i$, let 
\[
f_i(\bar x, \bar y, \bar z) = (x \times_{R'_i} y) \times_{R'_i} z - x \times_{R'_i} (y \times_{R'_i} z) \in J \otimes_k A_i, 
\]
where $f_i$ depends only on the classes $\bar x, \bar y, \bar z \in A_i$ of $x,y,z$ modulo 
$M' \otimes_{R'} \mathcal A'_i$, because $M'J = 0$.
We also note that $J \otimes_{R'} \mathcal A'_i \cong J \otimes_k A_i$.
The multiplication $\times_{R'_i}$ is associative if and only if $f_i(\bar x, \bar y, \bar z) = 0$.
We can alternatively define $\tilde f_i: A_i^{\otimes 4} \to J \otimes A_i$ by
$\tilde f_i(x_0,x_1,x_2,x_3) = x_0f_i(x_1,x_2,x_3)$ as the Hochschild $3$-cochain.

Let $\phi_{ji} = \phi_{ji}^{\mathcal A}: \mathcal A_i \to \mathcal A_j$ be the gluing homomorphism of $\mathcal A$ for $i < j$.
They are compatible with multiplications and satisfy the compatibility condition 
$\phi_{kj} \circ \phi_{ji} = \phi_{ki}$ for $i < j < k$.
Let $\phi'_{ji} = \phi^{\mathcal A'}_{ji}: \mathcal A'_i \to \mathcal A'_j$ be any homomorphism as $R'$-modules 
which induces $\phi^{\mathcal A}_{ji}$ when the tensor product $R \otimes_{R'}$ is taken.
We set $\phi^{\mathcal A'}_{ii} = \text{Id}_{\mathcal A'_i}$ for $i = j$.
We do not require that the $\phi'_{ji}$ are compatible with multiplications nor satisfy the compatibility condition.

Let 
\[
g_{ji}(\bar x, \bar y) = \phi'_{ji}(x \times_{R'_i} y) - \phi'_{ji}(x) \times_{R'_j} \phi'_{ji}(y) 
\in J \otimes A_j
\]
for $x,y \in \mathcal A'_i$.
$g_{ji}$ depends only on the classes $\bar x, \bar y \in A_i$ as before.
The $\phi'_{ji}$ are compatible with the multiplications $\times_{R'_i}$ and $\times_{R'_j}$ 
if and only if $g_{ji}(\bar x, \bar y) = 0$.
We have $g_{ii} = 0$ for $i = j$.

Let 
\[
h_{kji}(\bar x) = \phi'_{kj} \circ \phi'_{ji}(x) - \phi'_{ki}(x) \in J \otimes A_k
\]
for $x \in \mathcal A'_i$.
$h_{kji}$ depends only on $\bar x \in A_i$, and the $\phi'_{ji}$ satisfy the compatibility condition
if and only if $h_{kji}(\bar x) = 0$.
We have $h_{kji} = 0$ if two among $i,j,k$ coincide.

\vskip 1pc

Now we calculate coboundaries of the Hochschild cochains $f_i$, $g_{ji}$ and $h_{kji}$ on $A_i$ 
with coefficients in $J \otimes A_i$, $J \otimes A_j$ and $J \otimes A_k$, respectively, 
with respect to the Hochschild boundaries.
We should be careful that different rings $A_i,A_j,A_k$ with $i < j < k$ are involved, and these rings are related by
homomorphisms $\phi^0 := \phi^A$. 

\begin{Lem}\label{df}
Let $i < j < k < l$, and let $f_i: A_i^{\otimes 3} \to J \otimes A_i$, 
$g_{ji}: A_i^{\otimes 2} \to J \otimes A_j$ and 
$h_{kji}: A^i \to J \otimes A_k$ be the Hochschild cochains defined above.
Then the Hochschild coboundaries satisfy the following equalities, where $\phi^0_{ji} := \phi^A_{ji}$, etc.

(1) $df_i = 0$.

(2) $dg_{ji}
= - f_j(\phi^0_{ji})^{\otimes 3} + \phi_{ji}^0f_i$.

(3) $dh_{kji}
= - g_{kj}(\phi^0_{ji})^{\otimes 2} + g_{ki} - \phi^0_{kj}g_{ji}$.

(4) $\phi^0_{lk}h_{kji} - h_{lji} + h_{lki} - h_{lkj}\phi^0_{ji} = 0$. 
\end{Lem}

\begin{proof}
(1) 
\[
\begin{split}
&df_i(\bar x, \bar y, \bar z, \bar w) \\
&= \bar xf_i(\bar y,\bar z,\bar w) - f_i(\bar x\bar y,\bar z,\bar w) + f_i(\bar x,\bar y\bar z,\bar w) 
- f_i(\bar x,\bar y,\bar z\bar w) + f_i(\bar x,\bar y,\bar z)\bar w \\
&= 0
\end{split}
\]
because
\[
\begin{split}
&x((yz)w-y(zw)) - (((xy)z)w - (xy)(zw)) + (x(yz))w - x((yz)w) \\
&- ((xy)(zw) - x(y(zw))) + ((xy)z-x(yz))w = 0
\end{split}
\]
where multiplications are $\times_{R'_i}$.

(2) 
\[
\begin{split}
&dg_{ji}(\bar x, \bar y, \bar z) \\
&= \phi^0_{ji}(\bar x)g_{ji}(\bar y, \bar z) - g_{ji}(\bar x \bar y, \bar z)
+ g_{ji}(\bar x, \bar y\bar z) - g_{ji}(\bar x, \bar y)\phi^0_{ji}(\bar z) \\
&= \phi'_{ji}(x) \times_{R'_j} \phi'_{ji}(y \times_{R'_i} z) - \phi'_{ji}(x) \times_{R'_j} (\phi'_{ji}(y) \times_{R'_j} \phi'_{ji}(z)) \\
&- \phi'_{ji}((x \times_{R'_i} y) \times_{R'_i} z) + \phi'_{ji}(x \times_{R'_i} y) \times_{R'_j} \phi'_{ji}(z) \\
&+ \phi'_{ji}(x \times_{R'_i} (y \times_{R'_i} z)) - \phi'_{ji}(x) \times_{R'_j} \phi'_{ji}(y \times_{R'_i} z) \\
&- \phi'_{ji}(x \times_{R'_i} y) \times_{R'_j} \phi'_{ji}(z) + (\phi'_{ji}(x) \times_{R'_j} \phi'_{ji}(y)) \times_{R'_j} \phi'_{ji}(z) \\
&= - \phi'_{ji}(x) \times_{R'_j} (\phi'_{ji}(y) \times_{R'_j} \phi'_{ji}(z)) + (\phi'_{ji}(x) \times_{R'_j} \phi'_{ji}(y)) \times_{R'_j} \phi'_{ji}(z) \\
&+ \phi'_{ji}(x \times_{R'_i} (y \times_{R'_i} z)) - \phi'_{ji}((x \times_{R'_i} y) \times_{R'_i} z) \\
&= - f_j(\phi^0_{ji}(\bar x), \phi^0_{ji}(\bar y), \phi_{ji}^0(\bar z)) + \phi_{ji}^0(f_i(\bar x, \bar y, \bar z))
\end{split}
\]
where we used $\overline{x \times_{R'_i} y} = \bar x\bar y$, etc.

(3)
\[
\begin{split}
&\phi'_{kj}\phi'_{ji}(x \times_{R'_i} y) = \phi'_{kj}(\phi'_{ji}(x) \times_{R'_j} \phi'_{ji}(y) + g_{ji}(\bar x,\bar y)) \\
&= \phi'_{kj}\phi'_{ji}(x) \times_{R'_k} \phi'_{kj}\phi'_{ji}(y) + g_{kj}(\phi^0_{ji}(\bar x),\phi^0_{ji}(\bar y)) 
+ \phi^0_{kj}g_{ji}(\bar x,\bar y) \\
&= (\phi'_{ki}(x) + h_{kji}(\bar x)) \times_{R'_k} (\phi'_{ki}(y) + h_{kji}(\bar y)) \\
&+ g_{kj}(\phi^0_{ji}(\bar x),\phi^0_{ji}(\bar y)) + \phi^0_{kj}g_{ji}(\bar x,\bar y) \\
&= \phi'_{ki}(x \times_{R'_i} y) - g_{ki}(\bar x,\bar y) + \phi^0_{ki}(\bar x)h_{kji}(\bar y) 
+ h_{kji}(\bar x) \phi^0_{ki}(\bar y) \\
&+ g_{kj}(\phi^0_{ji}(\bar x),\phi^0_{ji}(\bar y)) + \phi^0_{kj}g_{ji}(\bar x,\bar y).
\end{split}
\]
Hence
\[
\begin{split}
&dh_{kji}(\bar x,\bar y) = \phi^0_{ki}(\bar x) h_{kji}(\bar y) - h_{kji}(\bar x \bar y) + h_{kji}(\bar x)\phi^0_{ki}(\bar y) \\
&= - g_{kj}(\phi^0_{ji}(\bar x),\phi^0_{ji}(\bar y)) + g_{ki}(\bar x,\bar y) - \phi^0_{kj}g_{ji}(\bar x,\bar y).
\end{split}
\]

(4) 
\[
\begin{split}
&\phi^0_{lk}h_{kji}(\bar x) - h_{lji}(\bar x) + h_{lki}(\bar x) - h_{lkj}(\phi^0_{ji}(\bar x)) \\
&= \phi'_{lk}(\phi'_{kj}\phi'_{ji}(x) - \phi'_{ki}(x)) - (\phi'_{lj}\phi'_{ji}(x) - \phi'_{li}(x)) \\
&+ (\phi'_{lk}\phi'_{ki}(x) - \phi'_{li}(x)) - (\phi'_{lk}\phi'_{kj}\phi'_{ji}(x) - \phi'_{lj}\phi'_{ji}(x)) \\
&= 0.
\end{split}
\]
\end{proof}

A different choice of multiplication on $\mathcal A'_i$ can be written as
\[
(x, y) \mapsto x \times_{R'_i} y + b_i(\bar x, \bar y)
\]
for $x,y \in \mathcal A'_i$, where $b_i(\bar x, \bar y) \in J \otimes_{R'} \mathcal A'_i = J \otimes_k A_i$ depends
only on $\bar x, \bar y \in A_i$, since $M'J = 0$. 
At the same time, a different gluing of the $\mathcal A'_i$ can be written as
\[
x \mapsto \phi'_{ji}(x) + c_{ji}(\bar x)
\]
for $x \in \mathcal A'_i$, where $c_{ji}(\bar x) \in J \otimes_{R'} \mathcal A'_j = J \otimes_k A_j$.

\begin{Lem}\label{f'}
Assume that $f_i, g_{ji}, h_{kji}$ are changed to $f'_i, g'_{ji}, h'_{kji}$
under these new multiplications and gluing.
Then the following hold.

(1) $f'_i = f_i - db_i$.

(2) $g'_{ji} = g_{ji} + \phi^0_{ji}b_i - b_j(\phi^0_{ji})^{\otimes 2} - dc_{ji}$.

(3) $h'_{kji} = h_{kji} + \phi^0_{kj}c_{ji} - c_{ki} + c_{kj}\phi^0_{ji}$.
\end{Lem}

\begin{proof}
(1) 
\[
\begin{split}
&f'_i(\bar x, \bar y, \bar z) \\
&= (x \times_{R'_i} y + b_i(\bar x, \bar y)) \times_{R'_i} z + b_i(\bar x\bar y, \bar z) \\
&- x \times_{R'_i} (y \times_{R'_i} z + b_i(\bar y, \bar z)) - b_i(\bar x, \bar y \bar z) \\
&= f_i(\bar x, \bar y, \bar z) - \bar x b_i(\bar y, \bar z) + b_i(\bar x\bar y, \bar z) 
 - b_i(\bar x, \bar y \bar z) + b_i(\bar x, \bar y) \bar z \\
&= f_i(\bar x, \bar y, \bar z) - db_i(\bar x, \bar y, \bar z).
\end{split}
\]

(2) 
\[
\begin{split}
&g'_{ji}(\bar x, \bar y) \\
&= \phi'_{ji}(x \times_{R'_i} y + b_i(\bar x, \bar y)) + c_{ji}(\bar x \bar y) \\
&- ((\phi'_{ji}(x) + c_{ji}(\bar x)) \times_{R'_j} (\phi'_{ji}(y) + c_{ji}(\bar y)) + b_j(\phi^0_{ji}(\bar x), \phi^0_{ji}(\bar y)) \\
&= g_{ji}(\bar x, \bar y) + \phi^0_{ji}b_i(\bar x, \bar y) - b_j(\phi^0_{ji}(\bar x), \phi^0_{ji}(\bar y)) \\
&- \phi^0_{ji}(\bar x) c_{ji}(\bar y) + c_{ji}(\bar x \bar y) - c_{ji}(\bar x) \phi^0_{ji}(\bar y) \\
&= g_{ji}(\bar x, \bar y) + \phi^0_{ji}b_i(\bar x, \bar y) - b_j(\phi^0_{ji}(\bar x), \phi^0_{ji}(\bar y))
- dc_{ji}(\bar x, \bar y).
\end{split}
\]

(3) 
\[
\begin{split}
&h'_{kji}(\bar x) = \phi'_{kj}(\phi'_{ji}(x) + c_{ji}(\bar x)) + c_{kj}(\phi^0_{ji}(\bar x)) - (\phi'_{ki}(x) + c_{ki}(\bar x)) \\
&= h_{kji}(\bar x) + \phi^0_{kj}(c_{ji}(\bar x)) - c_{ki}(\bar x) + c_{kj}(\phi^0_{ji}(\bar x)).
\end{split}
\]
\end{proof}

By Lemma \ref{df} (1) and Lemma \ref{f'} (1), the Hochschild cochain $f_i$ is closed and is 
determined up to the Hochschild coboundary $db_i$.
Thus the cohomology class $v(f_i):= [f_i] \in J \otimes \Gamma(\bigwedge^3 T_{U_i})$ is well-defined
in the sense that it is independent of the choice of the multiplication on $\mathcal A'_i$.
Moreover, Lemma \ref{df} (2) 
implies that the $v(f_i)$ glue together to define a global cohomology class
$\xi^{(3,0)} = v(f) \in J \otimes \Gamma(\bigwedge^3 T_X)$.
$\xi^{(3,0)} = 0$ is equivalent to saying that $v(f_i) = 0$ for all $i$, and in turn to that $f'_i = 0$ for 
suitable choices of the $b_i$.
Thus we have $\xi^{(3,0)} = 0$ if and only if there are choices of associative multiplications on the $\mathcal A'_i$
extending those on the $\mathcal A_i$.

\vskip 1pc

Now we assume that $\xi^{(3,0)} = 0 \in J \otimes \Gamma(\bigwedge^3 T_X)$. 
Thus we assume that $f'_i = 0$ for all $i$.
We fix one set of the $b_i$ which determine associative multiplications $\times_{R'_i}$ on the $\mathcal A'_i$.
The existence of such $b_i$ is guaranteed by the assumption that $\xi^{(3,0)} = 0$.
Then we may assume that $f_i = 0$ for all $i$.
It follows that $db_i = 0$ for all $i$, and
the Hochschild $2$-cochain $g_{ji}: A_i^{\otimes 2} \to J \otimes A_j$ of $A_i$ with coefficients in $J \otimes A_j$ 
satisfies the Hochschild cocyle condition that $dg_{ji} = 0$ by Lemma \ref{df} (2).
We note that there are still choices of the $b_i$ as long as $db_i = 0$.

Thus $g_{ji}$ for $i < j$ and $b_i$ determine $2$-vectors $v(g_{ji}) \in J \otimes \Gamma(\bigwedge^2 T_{U_j})$
and $v(b_i) \in J \otimes \Gamma(\bigwedge^2 T_{U_i})$.
From Lemma \ref{df} (3), the \v Cech $1$-cochain $v(g_{ji})$ satisfies the cocycle condition that 
$v(g_{ki}) = v(g_{kj}) + v(g_{ji}) \vert_{U_k}$.
Moreover by Lemma \ref{f'} (2), the \v Cech cocycles $v(g_{ji})$ and $v(g'_{ji})$ are equivalent up to the \v Cech coboundary of 
the \v Cech $0$-cochain $v(b_i)$.

Now we define a \v Cech cocycle $v(g_{ji})$ for the general case of non-ordered pairs $i,j$.
We set $l = \max \{i,j\}$ corresponding to
$U_i \cap U_j = U_l$, and define
\[
v(g_{ji}) = v(g_{li}) - v(g_{lj}) \in J \otimes \Gamma(\bigwedge^2 T_{U_l}).
\]
We note that, if $i < j$, then $l = j$, and $g_{lj} = 0$, hence $v(g_{ji}) = v(g_{li})$ as expected.

We check the cocycle condition for unordered indexes $i,j,k$.
Let $l = \max \{i,j\}$, $m = \max \{j,k\}$, $n = \max \{i,k\}$ and $t = \max \{i,j,k\}$.
Then
\[
\begin{split}
&(v(g_{ji}) - v(g_{ki}) + v(g_{kj}))\vert_{U_t} \\
&= (v(g_{li}) - v(g_{lj}) - v(g_{ni}) + v(g_{nk}) + v(g_{mj}) - v(g_{mk}))\vert_{U_t} \\
&= v(g_{ti}) - v(g_{tj}) - v(g_{ti}) + v(g_{tk}) + v(g_{tj}) - v(g_{tk}) = 0.
\end{split}
\]
We also have
\[
\begin{split}
&v(g_{ji}) - v(g'_{ji}) = v(g_{li}) - v(g_{lj}) - v(g'_{li}) + v(g'_{lj}) \\
&= (v(b_l) - v(b_i) - v(b_l) + v(b_j)) \vert_{U_l} = (v(b_j) - v(b_i))\vert_{U_l}.
\end{split}
\]
Therefore the collection $\{v(g_{ji})\}$ defines a \v Cech cohomology class
$\xi^{(2,1)} := v(g) \in J \otimes \check H^1(\bigwedge^2 T_X)$.
We have $\xi^{(2,1)} = 0$ if and only if there are suitable choices of the $b_i$ and the $c_{ji}$ such that 
$g'_{ji} = 0$.
Thus $\xi^{(2,1)} = 0$ holds if and only if there is a gluing of the $\mathcal A_i$ which is compatible with the multiplications.

\vskip 1pc

Now we assume that $\xi^{(2,1)} = 0 \in J \otimes \check H^1(\bigwedge^2 T_X)$.  
Then we may assume that $g_{ji} = 0$ and $dc_{ji} = 0$ for all $i < j$.
We note that we have still choices of the $c_{ji}$ as long as $dc_{ji} = 0$.

By Lemma \ref{df} (3), the Hochschild $1$-cochain $h_{kji}$ is closed: $dh_{kji} = 0$.
We define $v(h_{kji}) := [h_{kji}] \in J \otimes \Gamma(T_{U_k})$.
We note that the Hochschild coboundary map is trivial for $0$-cochains, because $A_i$ is commutative. 
Therefore $v(h_{kji}) = 0$ is equivalent to that $h_{kji} = 0$.

By Lemma \ref{df} (4), the \v Cech $2$-cocycle $v(h_{kji})$ is closed in the sense that
\[
(h_{kji} - h_{lji} + h_{lki} - h_{lkj}) \vert_{U_l} = 0
\]
for $i < j < k < l$.

\vskip 1pc

In general, we have the following lemma:

\begin{Lem}\label{S_n}
Let $I$ be a poset as above, $n$ a positive integer and $E$ an abelian group.
Let $I_n = \{(i_n, \dots, i_1) \in I^n \mid i_1 < \dots < i_n\}$ be the subset of ordered indexes of length $n$ in the poset $I$. 
Assume that there is a map $h: I_n \to E$ satisfying the cocycle condition: 
\[
\sum_{p=0}^n (-1)^p h(i_n, \dots, i_{p+1}, i_{p-1}, \dots, i_0) = 0
\]
for $i_0 < \dots < i_n$.
Set $h(i_n,\dots,i_1) = 0$ if any two among the $i_p$ coincide.
Define a map $\tilde h: I^n \to E$ by the following formula:
\[
\tilde h(i_n,\dots,i_1) = \sum_{s \in S_n} \text{sgn}(s) h(j_{s,n},\dots,j_{s,1})
\]
where 
\[
j_{s,k} = \max \{i_{s(1)}, \dots, i_{s(k)}\}.
\] 
Then $\tilde h \vert_{\bar I_n} = h$ for $\bar I_n = \{(i_n, \dots, i_1) \in I^n \mid i_1 \le \dots \le i_n\}$, 
and $\tilde h$ still satisfies the cocycle condition
\[
\sum_{p=0}^n (-1)^p \tilde h(i_n, \dots, i_{p+1}, i_{p-1}, \dots, i_0) = 0
\]
for arbitrary indexes $i_0, \dots, i_n$.
\end{Lem}

\begin{proof}
It is clear that $\tilde h \vert_{\bar I_n} = h$ from the definition of $\tilde h$.
We take a sequence of arbitrary indexes $(i_n, \dots, i_0) \in I^{n+1}$.
Let the permutation group $S_{n+1}$ act on the set $\{0,1,\dots,n\}$. 
For each $s \in S_{n+1}$, we define an increasing sequence $j_{s,0} \le j_{s,1} \le \dots \le j_{s,n}$ by
\[
j_{s,k} = \max \{i_{s(0)}, \dots, i_{s(k)}\}.
\]
We note that $j_{s,n} = j_n = \max \{i_0, \dots, i_n\}$ is independent of $s$, and
$j_{s,n-1}$ has $n+1$ possibilities, with possibly counting with multiplicities, depending on $s(n)$.

Since $h$ satisfies the cocycle condition, we have 
\[
\sum_{q=0}^n (-1)^q h(j_{s,n}, \dots, j_{s, q+1}, j_{s, q-1}, \dots, j_{s,0}) = 0.
\]
Thus, for each $p$, we have 
\[
\begin{split}
&\tilde h(i_n, \dots, i_{p+1}, i_{p-1}, \dots, i_0)
= \sum_{s(n) = p} (-1)^{n-p} \, \text{sgn}(s) h(j_{s,n-1}, \dots, j_{s,0}) \\
&= \sum_{s(n) = p} (-1)^{n-p} \, \text{sgn}(s) \sum_{q=0}^{n-1} (-1)^{n-1-q} h(j_{s,n}, \dots, j_{s, q+1}, j_{s, q-1}, \dots, j_{s,0}).
\end{split}
\]

We claim that, for a given $s$ and $q < n$, there is a uniquely determined $s' \in S_{n+1}$ such that 
$s \ne s'$ and that $\{s(0),\dots, s(p)\} = \{s'(0), \dots, s'(p)\}$ for $p \ne q$.
Indeed $s'$ is given by $s(p) = s'(p)$ for $p \ne q, q+1$, $s(q) = s'(q+1)$ and $s(q+1) = s'(q)$.
We note that $\text{sgn}(s) = - \text{sgn}(s')$, and we have
\[
h(j_{s,n}, \dots, j_{s, q+1}, j_{s, q-1}, \dots, j_{s,0}) = h(j_{s',n}, \dots, j_{s', q+1}, j_{s', q-1}, \dots, j_{s',0}).
\]
Now we calculate
\[
\begin{split}
&\sum_{p=0}^n (-1)^p \tilde h(i_n, \dots, i_{p+1}, i_{p-1}, \dots, i_0) \\
&= \sum_{p=0}^n \sum_{s(n) = p} \text{sgn}(s) \sum_{q=0}^{n-1} (-1)^{q+1} h(j_{s,n}, \dots, j_{s, q+1}, j_{s, q-1}, \dots, j_{s,0}) \\
&= 0
\end{split}
\]
since the sum is over the pairs of $s$ and $s'$ with opposite signs. 
\end{proof}

By Lemma \ref{S_n}, we extend the definition of $v(h_{kji})$ to the unordered $i,j,k$.
Therefore there is a well-defined \v Cech cocycle $\xi^{(1,2)} :=[v(h)] = \{v(h_{kji})\} \in J \otimes \check H^2(T_X)$.

\begin{Lem}
$\xi^{(1,2)} = 0$ if and only if there is a suitable choice of the $c_{ji}$ such that 
$h'_{kji} = 0$ for $i < j < k$.
\end{Lem}

Thus $\xi^{(1,2)} = 0$ holds if and only if there is a gluing of the $\mathcal A_i$ which satisfies the \v Cech cocycle condition.

\begin{proof}
Assume that $\xi^{(1,2)} = 0$.
Then there is a \v Cech cochain $v(c_{ji}) \in J \otimes \Gamma(T_{U_i \cap U_j})$ such that 
$v(h_{kji}) = (v(c_{ji}) - v(c_{ki}) + v(c_{ji})) \vert_{U_i \cap U_j \cap U_k}$.
In particular, there are such $v(c_{ji})$ for $i < j$.
By Lemma \ref{f'} (3), there exists gluing of the $\mathcal A'_i$ such that $h'_{kji} = 0$.

Conversely, assume that there are $c_{ji}$ for $i < j$ such that $h'_{kji} = 0$.
We define $c_{ji} = c_{ki} - c_{kj}$ for unordered $i,j$, where we set $k = \max \{i,j\}$.
Then, omitting $v$ and $\vert_{U_t}$, we have
\[
\begin{split}
&h_{kji} = h_{tli} - h_{tlj} + h_{tmj} - h_{tmk} + h_{tnk} - h_{tni} \\
&= c_{li} - c_{ti} + c_{tl} - (c_{lj} - c_{tj} + c_{tl}) 
+ c_{mj} - c_{tj} + c_{tm} - (c_{mk} - c_{tk} + c_{tm}) \\
&+ c_{nk} - c_{tk} + c_{tn} - (c_{ni} - c_{ti} + c_{tn}) \\
&= c_{li} - c_{lj} + c_{mj} - c_{mk} + c_{nk} - c_{ni} 
= c_{ji} - c_{ki} + c_{kj}.
\end{split}
\]
Hence $\xi^{(1,2)} = 0$.
\end{proof}

The combination of the above argument yields the proof of (1).

\vskip 1pc

(2) We assume that $\xi^{(3,0)} = \xi^{(2,1)} = \xi^{(1,2)} = 0$.
The argument in the proof of (1) showed that the set of choices of the 
multiplications and gluing for which $f_i = g_{ij} = h_{ijk} = 0$ hold corresponds bijectively to the set of the $b_i$ and $c_{ji}$ 
which satisfy the following conditions 
\[
\begin{split}
&db_i = 0, \,\,\, dc_{ji} = 0, \\
&\phi^0_{ji}b_i(\bar x, \bar y) - b_j(\phi^0_{ji}(\bar x), \phi^0_{ji}(\bar y)) = 0, \\
&\phi^0_{kj}(c_{ji}(\bar x)) - c_{ki}(\bar x) + c_{kj}(\phi^0_{ji}(\bar x)) = 0
\end{split}
\]
where we have $i < j < k$.
Since $db_i = dc_{ji} = 0$, the $b_i$ and the $c_{ji}$ determine classes 
$v(b_i) \in J \otimes \Gamma(\bigwedge^2 T_{U_i})$ and 
$v(c_{ji}) \in J \otimes \Gamma(T_{U_j})$.
They satisfy the cocycle condition, and we obtain
$v(b) \in J \otimes H^0(\bigwedge^2 T_X)$ and $v(c) \in J \otimes \check H^1(T_X)$, where 
we define $v(c_{ji}) = v(c_{ki}) - v(c_{kj})$ with $k = \max \{i,j\}$ for unordered $i,j$.

We still need to take into account of the fact that some of the deformations $\mathcal A'$ of 
$\mathcal A$ thus obtained are isomorphic, 
and we need to take the quotient set of the set of the $b_i, c_{ji}$.
Let $\tilde b_i$ and $\tilde c_{ji}$ be other choices for multiplications and gluing.
We will prove that $v(b) = v(\tilde b) \in J \otimes H^0(\bigwedge^2 T_X)$ and 
$v(c) = v(\tilde c) \in J \otimes \check H^1(T_X)$ hold if and only if 
the resulting NC deformations $\mathcal A'$ and $\tilde{\mathcal A}'$ are isomorphic.

An isomorphism $\mathcal A' \to \tilde{\mathcal A}'$ is given by 
isomorphisms $\mathcal A'_i \to \tilde{\mathcal A}'_i$ of $R'$-modules which induce identity over $R$ and 
are compatible with multiplications and gluing.
We can write them as
\[
x \mapsto x + e_i(\bar x)
\]
for $x \in \mathcal A'_i$, where $e_i(\bar x) \in J \otimes A_i$.

They are compatible with the multiplications if and only if 
\[x \times_{R'_i} y + b_i(\bar x, \bar y) + e_i(\bar x \bar y)
= (x + e_i(\bar x)) \times_{R'_i} (y + e_i(\bar y)) + \tilde b_i(\bar x, \bar y), 
\]
that is 
\begin{equation}\label{e1}
\tilde b_i(\bar x, \bar y) - b_i(\bar x, \bar y) = - \bar x e_i(\bar y) + e_i(\bar x \bar y) - e_i(\bar x) \bar y
= - de_i(\bar x, \bar y).
\end{equation}
They are compatible with the gluing if and only if 
\[
\phi'_{ji}(x) + c_{ji}(\bar x) + \phi^0_{ji}(e_i(\bar x)) = \phi'_{ji}(x) + \tilde c_{ji}(\bar x) + e_j(\bar x), 
\]
that is 
\begin{equation}\label{e2}
\tilde c_{ji}(\bar x) - c_{ji}(\bar x) = - e_j(\bar x) + \phi^0_{ji}(e_i(\bar x)).
\end{equation}

There exist such $e_i$'s which sends $(b_i, c_{ji})$ to $(\tilde b_i,\tilde c_{ji})$ through (\ref{e1}) and (\ref{e2}) 
if and only if 
$v(b) = v(\tilde b) \in J \otimes H^0(\bigwedge^2 T_X)$ and 
$v(c) = v(\tilde c) \in J \otimes \check H^1(T_X)$. 
Indeed the reduction to the Hochschild cohomology class of the $b_i$ is done by the boundaries $de_i$, and the remaining 
reduction to the \v Cech cohomology class of the $c_{ji}$ is done by those $e_i$ such that $de_i = 0$. 
Thus the part (2) is proved.

\vskip 1pc

(3) Let $\times_{R'_i}$ and $\phi'_{ij}$ be a choice of multiplications and gluing over $R'$, and let 
$\xi^{(p,q)} \in J \otimes H^q(X, \bigwedge^p T_X)$ be the corresponding obstruction classes determined 
by the cochains $f_i,g_{ji},h_{kji}$.
Then the map $\beta'$ induces a choice of multiplications and gluing $\times_{R'_{1,i}}$ and $\phi'_{1, ij}$ 
over $R'_1$.
The corresponding cochains satisfy $(f_{1,i}, g_{1,ji}, h_{1,kji}) = \beta'(f_i, g_{ji}, h_{kji})$.
Hence $(\xi^{(1)})^{(p,q)} = \beta_J(\xi^{(p,q)})$.
\end{proof}

Now we prove the existence of a semi-universal NC deformation of $X$ using the Schlessinger conditions 
(\cite{Schlessinger}):

\begin{Prop}\label{Schlessinger condiiton}
Let $X$ be a smooth algebraic variety, and
assume that $\dim H^0(X, \bigwedge^2 T_X) + \dim H^1(X, T_X)$ is finite.
Then there exists a semi-universal deformation for the NC deformation functor $\text{Def}_X$.
\end{Prop}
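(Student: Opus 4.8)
The plan is to verify that the functor $\text{Def}_X$ satisfies Schlessinger's conditions $(H_1)$, $(H_2)$ and $(H_3)$ of \cite{Schlessinger}, which together guarantee the existence of a hull, i.e.\ a semi-universal formal deformation. First, $\text{Def}_X(k)$ is a single point: the only NC deformation over $k$ is $X$ itself with the identity identifications $\alpha_i = \mathrm{id}_{A_i}$. Condition $(H_3)$ is then immediate: applying Theorem \ref{T1T2} with $R = k$ and $R' = k[\epsilon]/(\epsilon^2)$, the trivial deformation $X$ over $k$ always extends over $k[\epsilon]$ (take $\mathcal A_i = A_i \otimes_k k[\epsilon]$ with the $k[\epsilon]$-bilinear extension of the multiplication and $\phi^A_{ji}\otimes\mathrm{id}$ for gluing), so all obstruction classes vanish and Theorem \ref{T1T2}(2) gives $t_{\text{Def}_X} = \text{Def}_X(k[\epsilon]) \cong k\epsilon\otimes(H^0(X,\bigwedge^2 T_X)\oplus H^1(X,T_X))$, which is finite-dimensional by hypothesis.

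For $(H_1)$ I would use the fibre-product construction. Let $R'\to R$ and $R''\to R$ be morphisms in $(\text{Art})$ with $R''\to R$ surjective, put $R''' = R'\times_R R''$, and let $\mathcal A' \in \text{Def}_X(R')$, $\mathcal A''\in\text{Def}_X(R'')$ induce the same $\mathcal A\in\text{Def}_X(R)$. Keeping the same covering and poset $I$, set $\mathcal A'''_i := \mathcal A'_i\times_{\mathcal A_i}\mathcal A''_i$ and let the multiplications, the gluing homomorphisms $\phi'''_{ji} := \phi'_{ji}\times_{\phi_{ji}}\phi''_{ji}$ and the identifications $\alpha'''_i$ be those induced componentwise. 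Since $R''\to R$ is surjective, the standard patching lemma shows that $\mathcal A'''_i$ is flat (indeed free) over $R'''$ and that $k\otimes_{R'''}\mathcal A'''_i\cong A_i$; associativity, $R'''$-bilinearity, transitivity of the $\phi'''_{ji}$, and compatibility with $\alpha'''$ are inherited from the two factors because they agree over $R$. The flatness and birationality of the $\phi'''_{ji}$ are automatic by Proposition \ref{flat birational}. Thus $\mathcal A''' \in\text{Def}_X(R''')$ maps to $(\mathcal A',\mathcal A'')$, which proves $(H_1)$.

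It remains to prove $(H_2)$: for $R = k$ and $R'' = k[\epsilon]/(\epsilon^2)$ the map $\text{Def}_X(R'\times_k k[\epsilon]) \to \text{Def}_X(R')\times\text{Def}_X(k[\epsilon])$ is bijective. Surjectivity is a special case of $(H_1)$; in particular every $\mathcal A'\in\text{Def}_X(R')$ admits an extension over $R''' = R'\times_k k[\epsilon]$, which fits in the small extension $0\to k\epsilon\to R'''\to R'\to 0$. By Theorem \ref{T1T2}(2) the set of such extensions is a torsor under $k\epsilon\otimes(H^0(X,\bigwedge^2 T_X)\oplus H^1(X,T_X))$, and $\text{Def}_X(k[\epsilon])$ is, as a torsor under the same group with the trivial extension as base point, identified with it. The remaining point is that restriction along the projection $R'''\to k[\epsilon]$ is equivariant for these actions; this follows from the cochain description of the torsor structure in the proof of Theorem \ref{T1T2}(2) together with its functoriality (part (3)), since the ideal $k\epsilon$ is the same in $R'''$ and in $k[\epsilon]$. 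Hence each fibre of $\text{Def}_X(R''')\to\text{Def}_X(R')$ maps bijectively onto $\text{Def}_X(k[\epsilon])$, giving $(H_2)$, and Schlessinger's theorem then produces a semi-universal formal NC deformation.

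I expect $(H_1)$ to be the main obstacle: one must check carefully that the fibre product $\mathcal A'_i\times_{\mathcal A_i}\mathcal A''_i$ is $R'''$-flat and that the componentwise-induced data genuinely constitute an NC deformation. Because the $\mathcal A_i$ are non-commutative and are glued along a poset by homomorphisms rather than forming an honest sheaf, the associativity and transitivity conditions have to be verified by hand, although each becomes routine once the module-theoretic patching lemma is in place.
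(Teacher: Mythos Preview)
Your proof is correct and follows essentially the same strategy as the paper: verify Schlessinger's conditions by constructing the fibre product $\mathcal A'_i \times_{\mathcal A_i} \mathcal A''_i$ componentwise for the surjectivity condition, and treat the bijectivity condition over $k$ separately. The paper packages the fibre-product step into a separate lemma (Lemma~\ref{glue}), where flatness is made transparent by presenting $R'\times_R R''$ as $P/(I_1\cap I_2)$ for a common power-series ring $P$, so that the fibre product of the $\mathcal A_i$'s is visibly $P/(I_1\cap I_2)\otimes_k A_i$; this is equivalent to your appeal to the standard patching lemma.

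The only substantive difference is in the argument for $(H_2)$. Where you invoke the torsor description from Theorem~\ref{T1T2}(2) and check equivariance of the restriction map, the paper gives a shorter observation: the fibre-product map can fail injectivity only when a nontrivial automorphism of the common reduction over $R_0$ does not lift, and over $R_0=k$ there are no nontrivial automorphisms because the identifications $\alpha_i$ are fixed by definition. Both routes are valid; the paper's is more economical, while yours makes the vector-space structure on the tangent space do the work.
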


\begin{proof}
We need to check the conditions ($\tilde H$) and ($H_e$) of \cite{Sernesi} Chapter 2 (cf. \cite{NC base} Theorem 4.1).
($\tilde H$) says that a natural map
\[
\text{Def}_X(R_1 \times_{R_0} R_2) \to \text{Def}_X(R_1) \times_{\text{Def}_X(R_0)} \text{Def}_X(R_2)
\]
is surjective for morphisms $R_i \to R_0$ in $(\text{Art})$, while ($H_e$) says that it is bijective 
when $R_0 = k$ and $R_1 = k[\epsilon]/(\epsilon^2)$.
($\tilde H$) is proved in the following lemma.
This map may be non-injective because an automorphism of a deformation over $R_0$ may be 
non-extendable to a deformation over $R_1$.
If $R_0 = k$, then there is no automorphism by definition, hence ($H_e$) holds.
\end{proof}

\begin{Lem}\label{glue}
Let $P = k[[t_1,\dots,t_n]]$, $R_i = P/I_i \in (\text{Art})$ for $i = 1,2$, and let 
$R_0 = P/(I_1+I_2)$ and $R' = P/(I_1 \cap I_2)$. 
Assume that there are NC deformations $\mathcal A^{(i)}$ of 
$X$ over $R_i$ such that $R_0 \otimes_{R_1} \mathcal A^{(1)} \cong R_0 \otimes_{R_2} \mathcal A^{(2)} 
:= \mathcal A^{(0)}$.
Then there exists an NC deformation $\mathcal A'$ of $X$ over $R'$ such that  
$R_i \otimes_{R'} \mathcal A' \cong \mathcal A^{(i)}$.
\end{Lem}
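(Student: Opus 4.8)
The plan is to build $\mathcal A'$ by gluing $\mathcal A^{(1)}$ and $\mathcal A^{(2)}$ along $\mathcal A^{(0)}$ over the fibre product ring. First I would observe that $R'$ is exactly this fibre product: the sequence
\[
0 \to P/(I_1\cap I_2) \to (P/I_1)\oplus(P/I_2) \to P/(I_1+I_2) \to 0,
\]
with the difference map on the right, is exact, so $R' \cong R_1\times_{R_0}R_2$; moreover $R_1\to R_0$ and both projections $R'\to R_j$ are surjective, and $R'\in(\mathrm{Art})$ since it embeds into $R_1\oplus R_2$.

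Next, fix the datum $\psi_i\colon R_0\otimes_{R_1}\mathcal A^{(1)}_i \xrightarrow{\ \sim\ } R_0\otimes_{R_2}\mathcal A^{(2)}_i =: \mathcal A^{(0)}_i$; it is an isomorphism of NC deformations over $R_0$, hence compatible with the $\alpha$'s and with the gluing homomorphisms. For each $i\in I$ set
\[
\mathcal A'_i := \mathcal A^{(1)}_i \times_{\mathcal A^{(0)}_i} \mathcal A^{(2)}_i
= \{(a_1,a_2)\in\mathcal A^{(1)}_i\times\mathcal A^{(2)}_i : \psi_i(\bar a_1)=\bar a_2\},
\]
an associative subalgebra of $\mathcal A^{(1)}_i\times\mathcal A^{(2)}_i$ and an $R'$-algebra via $R'=R_1\times_{R_0}R_2$. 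For $i<j$ let $\phi'^{\mathcal A'}_{ji}$ be the restriction of $\phi^{\mathcal A^{(1)}}_{ji}\times\phi^{\mathcal A^{(2)}}_{ji}$; it carries $\mathcal A'_i$ into $\mathcal A'_j$ because $\psi$ intertwines the two gluing systems over $R_0$, and transitivity of the $\phi'_{ji}$ is inherited componentwise. This produces an NC scheme of $R'$-algebras.

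The crux is to show each $\mathcal A'_i$ is flat over $R'$ with $R_j\otimes_{R'}\mathcal A'_i\cong\mathcal A^{(j)}_i$; this is the Milnor--Ferrand patching lemma for modules over a fibre product of rings, applicable because $R_1\to R_0$ is surjective (compare the verification of Schlessinger's condition $(\tilde H)$ for classical $\mathrm{Def}_X$ in \cite{Schlessinger}, \cite{Sernesi}). In our situation it is nearly elementary: flatness of $\mathcal A^{(j)}_i$ over $R_j$ together with $k\otimes_{R_j}\mathcal A^{(j)}_i\cong A_i$ forces $\mathcal A^{(j)}_i\cong R_j\otimes_k A_i$, so $\mathcal A^{(j)}_i$ is $R_j$-free; choosing such isomorphisms $\alpha$-compatibly, $\psi_i$ becomes an $R_0$-automorphism $\bar\psi_i$ of $R_0\otimes_k A_i$ reducing to $\mathrm{id}_{A_i}$ modulo $M_0$. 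Using freeness of $\mathcal A^{(2)}_i$ over $R_2$ and surjectivity of $R_2\to R_0$ one lifts $\bar\psi_i$ to an $R_2$-automorphism $\tilde\psi_i$ of $\mathcal A^{(2)}_i$ (it reduces to $\mathrm{id}_{A_i}$ mod $M_2$, hence is invertible since $M_2$ is nilpotent); the module automorphism $(\mathrm{id},\tilde\psi_i^{-1})$ then identifies $\mathcal A'_i$, \emph{as an $R'$-module}, with the fibre product taken along the identity, which is $(R_1\otimes_k A_i)\times_{R_0\otimes_k A_i}(R_2\otimes_k A_i)\cong R'\otimes_k A_i$ (as $-\otimes_k A_i$ is exact and hence commutes with the fibre product). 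Thus $\mathcal A'_i$ is $R'$-flat and $R_j\otimes_{R'}\mathcal A'_i\cong\mathcal A^{(j)}_i$, the latter isomorphisms being automatically algebra isomorphisms compatible with the $\phi$'s.

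Finally, $k\otimes_{R'}\mathcal A'_i\cong k\otimes_{R_j}(R_j\otimes_{R'}\mathcal A'_i)\cong k\otimes_{R_j}\mathcal A^{(j)}_i\xrightarrow{\alpha^{(j)}_i}A_i$, and for $j=1,2$ these two identifications coincide because $\psi$ is $\alpha$-compatible; this defines $\alpha'$, so $\mathcal A':=(\mathcal A'_i,\phi'_{ji},\alpha')$ is an NC deformation of $X$ over $R'$ (the flatness and birationality of the $\phi'_{ji}$ being automatic by Proposition \ref{flat birational}), with $R_j\otimes_{R'}\mathcal A'\cong\mathcal A^{(j)}$ as deformations by functoriality of the patching isomorphisms. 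The only genuinely nontrivial point is the patching/flatness lemma, and inside it the need to absorb the nontrivial gluing automorphism $\psi_i$; everything else is bookkeeping with fibre products.
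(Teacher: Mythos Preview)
Your proof is correct and follows essentially the same route as the paper's: both identify $R'$ with the fibre product $R_1\times_{R_0}R_2$ and then set $\mathcal A' = \mathcal A^{(1)}\times_{\mathcal A^{(0)}}\mathcal A^{(2)}$ componentwise, with ring structure and gluing inherited from the factors. Your treatment is in fact more careful than the paper's terse argument, since you explicitly verify $R'$-flatness of $\mathcal A'_i$ and absorb the possibly nontrivial identification $\psi_i$ by lifting it through $R_2$, whereas the paper simply records the module isomorphism $(R_1\otimes_k A_i)\times_{R_0\otimes_k A_i}(R_2\otimes_k A_i)\cong R'\otimes_k A_i$ and declares the result.
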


\begin{proof}
We claim that $P/I_1 \times_{P/(I_1+I_2)} P/I_2 \cong P/(I_1 \cap I_2)$.
Indeed there is a natural map from the right hand side to the left hand side which sends $a \mod I_1 \cap I_2$ to 
$(a \mod I_1, a \mod I_2)$.
It is injective, because $a \in I_1$ and $a \in I_2$ implies that $a \in I_1 \cap I_2$.

We prove that it is surjective.
Suppose that $a_i + I_i$ for $a_i \in P$ give the same class $a_1 + I_1 + I_2 = a_2 + I_1 + I_2$.
Then there are elements $b_i, b_i' \in I_i$ such that 
$a_1 + b_1 + b_2 = a_2 + b'_1 + b'_2$.
Thus $a_1 + b_1-b'_1 = a_2 + b'_2 - b_2 = a \in P$.
Then $a \mod I_1 \cap I_2$ is mapped to  
$(a_1 \mod I_1, a_2 \mod I_2)$.

Similarly, we obtain 
\[
(P/I_1 \otimes A_i) \times_{P/(I_1+I_2) \otimes A_i} (P/I_2 \otimes A_i) 
\cong P/(I_1 \cap I_2) \otimes A_i
\]
as $P$-modules.
Then $\mathcal A' = \mathcal A^{(1)} \times_{\mathcal A^{(0)}} \mathcal A^{(2)}$ with ring structures and gluing
is the desired NC deformation.
\end{proof}

As a corollary of Theorem \ref{T1T2} with Proposition \ref{Schlessinger condiiton}, we obtain the following:

\begin{Thm}\label{semi-universal}
Let $X$ be a smooth algebraic variety, and
assume that 
\[
\begin{split}
&n = \dim H^0(X, \bigwedge^2 T_X) + \dim H^1(X, T_X), \\
&m = \dim H^0(X, \bigwedge^3 T_X) + \dim H^1(X, \bigwedge^2 T_X) + \dim H^2(X, T_X)
\end{split}
\]
are finite.
Then there exists a semi-universal deformation for the NC deformation functor $\text{Def}_X$ over 
\[
R := k[[t_1,\dots,t_n]]/(f_1,\dots,f_m)
\]
for some formal power series $f_i$ of orders in $[2, \infty]$.
\end{Thm}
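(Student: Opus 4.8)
The plan is to obtain Theorem \ref{semi-universal} as a formal consequence of Theorem \ref{T1T2} and Proposition \ref{Schlessinger condiiton}, following the standard Schlessinger machinery (see \cite{Schlessinger}, \cite{Sernesi} Chapter 2). By Proposition \ref{Schlessinger condiiton}, since $n = \dim H^0(X, \bigwedge^2 T_X) + \dim H^1(X, T_X) = \dim T^1$ is finite, the functor $\text{Def}_X$ admits a semi-universal formal deformation, i.e.\ there is a complete local $k$-algebra $R$ with a hull $h_R \to \text{Def}_X$ which is smooth and induces an isomorphism on tangent spaces. It remains to identify $R$ with a quotient $k[[t_1,\dots,t_n]]/(f_1,\dots,f_m)$ of the claimed shape. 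First I would record that the tangent space $T^1 := \text{Def}_X(k[\epsilon]/(\epsilon^2))$ is $H^0(X,\bigwedge^2 T_X)\oplus H^1(X,T_X)$ by part (2) of Theorem \ref{T1T2} applied to the small extension $0 \to (\epsilon) \to k[\epsilon]/(\epsilon^2) \to k \to 0$ (here $J = (\epsilon) \cong k$, and there are no obstructions since everything in sight vanishes over $k$). Hence $R$ has exactly $n$ generators: $R \cong P/\mathfrak a$ with $P = k[[t_1,\dots,t_n]]$ and $\mathfrak a \subset \mathfrak m_P^2$.

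Next I would bound the number of relations by the dimension of the obstruction space $T^2 := H^0(X,\bigwedge^3 T_X)\oplus H^1(X,\bigwedge^2 T_X)\oplus H^2(X,T_X)$, which is $m$ by Theorem \ref{T1T2}(1). The key input is the obstruction map: for the universal situation one considers, for each small extension, the obstruction to lifting, and by part (1) and the functoriality in part (3) of Theorem \ref{T1T2}, these obstructions are computed by a map valued in $T^2$ that is functorial with respect to morphisms of small extensions. The standard argument (Schlessinger; see also \cite{Sernesi} Prop.\ 2.2.x on the number of relations of a hull) then shows that $\mathfrak a$ can be generated by at most $m = \dim T^2$ elements: one takes $\mathfrak a/(\mathfrak m_P \mathfrak a)$, chooses a minimal generating set $\bar f_1,\dots,\bar f_r$, and uses the injection of $\mathfrak a/(\mathfrak m_P\mathfrak a)$ into the dual of $T^2$ furnished by the obstruction map applied to the small extensions $0 \to \mathfrak a/(\mathfrak m_P\mathfrak a + \langle \text{other }f_j\rangle) \to P/(\mathfrak m_P\mathfrak a + \langle\text{other }f_j\rangle) \to R \to 0$, so that $r \le m$. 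Concretely, the obstruction to lifting the $R$-deformation along $P/(\mathfrak m_P\mathfrak a)\to R$ is an element of $(\mathfrak a/\mathfrak m_P\mathfrak a)\otimes$-- no: it is an element of $\mathrm{Hom}(\mathfrak a/\mathfrak m_P\mathfrak a, T^2)$-valued data whose non-degeneracy gives $\dim(\mathfrak a/\mathfrak m_P\mathfrak a)\le \dim T^2 = m$. Padding with zero relations if $r < m$, we may write $\mathfrak a = (f_1,\dots,f_m)$.

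Finally I would check the order condition: since $R$ is a hull, the map on tangent spaces $T^1(R) \to T^1(\text{Def}_X)$ is bijective, so $\mathfrak a \subset \mathfrak m_P^2$, i.e.\ each $f_i$ has order $\ge 2$ (or is identically $0$, accounting for the padding), giving the stated range $[2,\infty]$ of orders. Assembling: $R = k[[t_1,\dots,t_n]]/(f_1,\dots,f_m)$ carries a semi-universal NC deformation of $X$.

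\emph{Main obstacle.} The genuinely substantive point is the bound $r \le m$ on the minimal number of relations, which requires that the obstruction classes $\xi^{(3,0)},\xi^{(2,1)},\xi^{(1,2)}$ of Theorem \ref{T1T2} assemble into a single well-defined obstruction theory valued in $T^2$ that is additive and functorial for morphisms of small extensions --- this is exactly what parts (1) and (3) of Theorem \ref{T1T2} provide, but one must be careful that the \emph{iterated} nature of the obstruction (the class $\xi^{(2,1)}$ is only defined once $\xi^{(3,0)}=0$, etc.) does not interfere with the linear-algebra argument; in practice one works over the relevant small extensions where the earlier obstructions automatically vanish, so the total obstruction lands in the full $T^2$ and the standard Schlessinger counting applies verbatim. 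Everything else is bookkeeping with the already-established Theorem \ref{T1T2} and Proposition \ref{Schlessinger condiiton}.
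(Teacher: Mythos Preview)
Your proposal is correct and follows exactly the approach the paper intends: the paper presents Theorem \ref{semi-universal} simply as ``a corollary of Theorem \ref{T1T2} with Proposition \ref{Schlessinger condiiton}'' without further argument, so your write-up in fact supplies the standard Schlessinger details that the paper leaves implicit. Your identification of $T^1$ via Theorem \ref{T1T2}(2), the bound on the number of relations via the obstruction theory of Theorem \ref{T1T2}(1),(3), and the order condition from $\mathfrak a \subset \mathfrak m_P^2$ are precisely the ingredients the paper is invoking; the one cosmetic point is that your mid-sentence self-correction about where the obstruction lives should be cleaned up (the obstruction for $0 \to \mathfrak a/\mathfrak m_P\mathfrak a \to P/\mathfrak m_P\mathfrak a \to R \to 0$ lies in $(\mathfrak a/\mathfrak m_P\mathfrak a)\otimes T^2$, and versality of $R$ forces the induced map $(T^2)^* \to \mathfrak a/\mathfrak m_P\mathfrak a$ to be surjective).
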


\section{twisted NC deformations}

We consider twisted version of NC schemes and quasi-coherent sheaves on them.
We refer to \cite{DVLL} for definitions of more general situations.

\begin{Defn}
A {\em twisted NC scheme} $(\mathcal A, \tau)$ and a 
{\em $\tau$-twisted quasi-coherent $\mathcal A$-module} $\mathcal M$ 
consist of the following data:

\begin{enumerate}
\item A poset $I$ on which the maximum $\max\{i,j\}$ exists for any two elements $i,j \in I$;

\item A set of associative algebras $\mathcal A_i$ parametrized by $i \in I$.

\item Algebra homomorphisms $\phi_{ji}: \mathcal A_i \to \mathcal A_j$ for 
any pair $i < j$
(we set $\phi_{ii} = \text{Id}_{\mathcal A_i}$);

\item A set of right $\mathcal A_i$-modules $\mathcal M_i$.

\item Right module isomorphisms $\psi_{ji}: \mathcal M_i \otimes_{\mathcal A_i} \mathcal A_j \to \mathcal M_j$
over $\phi_{ji}$, i.e., $\psi_{ji}(mx) = \psi_{ji}(m) \phi_{ji}(x)$ for $m \in \mathcal M_i$ and $x \in \mathcal A_i$
(we set $\psi_{ii} = \text{Id}_{\mathcal M_i}$);

\item A set of invertible elements $\tau_{kji} \in \mathcal A_k^{\times}$ for $i < j < k$
(we set $\tau_{kji} = 1$ if two among $i,j,k$ coincide).
\end{enumerate}

which satisfy the following conditions:

\begin{itemize}
\item $\tau_{lji}\tau_{lkj} = \tau_{lki}\phi_{lk}(\tau_{kji})$ for $i < j < k < l$.

\item $\phi_{kj}\phi_{ji}(x) = \tau_{kji}^{-1}\phi_{ki}(x)\tau_{kji}$ for $i < j < k$ and $x \in \mathcal A_i$.

\item $\psi_{kj}\psi_{ji}(m) = \psi_{ki}(m)\tau_{kji}$ for $i < j < k$ and $m \in \mathcal M_i$.

\item $\phi_{ji}$ is two-sided flat, and the natural map $\mathcal A_j \to \mathcal A_j \otimes_{\mathcal A_i} \mathcal A_j$ 
is bijective.
\end{itemize}
\end{Defn}

The first condition is required by the following comparison for $m \in \mathcal M_i$:
\[
\begin{split}
&\psi_{lk}\psi_{kj}\psi_{ji}(m) = \psi_{lj}(\psi_{ji}(m))\tau_{lkj} = \psi_{li}(m)\tau_{lji}\tau_{lkj}, \\
&\psi_{lk}\psi_{kj}\psi_{ji}(m) = \psi_{lk}(\psi_{ki}(m)\tau_{kji}) = \psi_{li}(m)\tau_{lki}\phi_{lk}(\tau_{kji}).
\end{split}
\]
The second condition is reduced to the usual cocycle condition if $\tau_{kji}$ is in the center of $\mathcal A_k$.
The meaning of the third condition is that $\mathcal M$ is twisted by $\tau$.
The second and third conditions are compatible by the following calculation for $m \in \mathcal M_i$ and $x \in \mathcal A_i$:
\[
\begin{split}
&\psi_{kj}\psi_{ji}(mx) = \psi_{kj}(\psi_{ji}(m)\phi_{ji}(x)) = \psi_{kj}\psi_{ji}(m) \phi_{kj}\phi_{ji}(x) \\
&= \psi_{ki}(m)\tau_{kji} \tau_{kji}^{-1}\phi_{ki}(x)\tau_{kji} = \psi_{ki}(mx)\tau_{kji}.
\end{split}
\]
The last condition is automatic for NC deformations by Proposition \ref{flat birational}.

$\mathcal M$ is said to be {\em coherent} if the $\mathcal M_i$ are finitely generated $\mathcal A_i$-modules.
The category of $\tau$-twisted coherent (resp. quasi-coherent) $\mathcal A$-modules is denoted by 
$\text{coh}(\mathcal A, \tau)$ (resp. $\text{Qcoh}(\mathcal A, \tau)$).
These categories are behind the definitions on the twisted NC deformations, but they do not appear logically in the arguments.
They are abelian categories thanks to the flatness of the gluing $\phi_{ji}$.

\vskip 1pc

We define an equivalence between twisted NC schemes:

\begin{Defn}
An {\em equivalence} between twisted schemes, written as $(\mathcal A, \tau) \sim (\mathcal A', \tau')$, 
consists of algebra isomorphisms $\epsilon_i: \mathcal A_i \to \mathcal A'_i$ and invertible elements
$\rho_{ji} \in \mathcal A_j$ which satisfy the following conditions:

\begin{enumerate}

\item $\phi'_{ji}(\epsilon_i(x)) = \epsilon_j(\rho_{ji}^{-1}\phi_{ji}(x)\rho_{ji})$.

\item $\tau'_{kji} = \epsilon_k(\rho_{ki}^{-1} \tau_{kji} \phi_{kj}(\rho_{ji}) \rho_{kj})$.

\end{enumerate}

\end{Defn}

\begin{Rem}
(1) If $(\mathcal A, \tau) \sim (\mathcal A', \tau')$, then there is an equivalence of abelian categories
\[
\Phi: \text{Qcoh}(\mathcal A, \tau) \to \text{Qcoh}(\mathcal A', \tau')
\]
given by isomorphisms $\epsilon^{\mathcal M}_i: \mathcal M_i \to \Phi(\mathcal M)_i$ over $\epsilon_i$
with gluing 
\[
\psi^{\Phi(\mathcal M)}_{ji}(\epsilon^{\mathcal M}_i(m)) 
= \epsilon^{\mathcal M}_j (\psi^{\mathcal M}_{ji}(m) \rho_{ji})
\]
for $m \in \mathcal M_i$.

We check first that $\psi^{\mathcal M}_{ji}$ and $\psi^{\Phi(\mathcal M)}_{ji}$ are compatible with 
$\phi_{ji}$ and $\phi'_{ji}$.
Indeed we have
\[
\begin{split}
&\psi^{\Phi(\mathcal M)}_{ji}(\epsilon^{\mathcal M}_i(mx)) 
= \epsilon^{\mathcal M}_j (\psi^{\mathcal M}_{ji}(mx) \rho_{ji})
= \epsilon^{\mathcal M}_j (\psi^{\mathcal M}_{ji}(m) \phi_{ji}(x) \rho_{ji}) \\
&= \epsilon^{\mathcal M}_j (\psi^{\mathcal M}_{ji}(m) \rho_{ji}) \phi'_{ji}(\epsilon_i(x)) 
= \psi^{\Phi(\mathcal M)}_{ji}(\epsilon^{\mathcal M}_i(m))\phi'_{ji}(\epsilon_i(x))
\end{split}
\]
for $m \in \mathcal M_i$ and $x \in \mathcal A_i$.
We check the compatibility with the change of twists:
\[
\begin{split}
&\epsilon^{\mathcal M}_k(\psi^{\mathcal M}_{ki}(m) \rho_{ki}) \tau'_{kji} 
=\psi^{\Phi(\mathcal M)}_{ki}(\epsilon^{\mathcal M}_i(m)) \tau'_{kji} 
= \psi^{\Phi(\mathcal M)}_{kj}\psi^{\Phi(\mathcal M)}_{ji}(\epsilon^{\mathcal M}_i(m)) \\
&= \psi^{\Phi(\mathcal M)}_{kj}\epsilon^{\mathcal M}_j(\psi^{\mathcal M}_{ji}(m) \rho_{ji})
= \epsilon^{\mathcal M}_k (\psi^{\mathcal M}_{kj}(\psi^{\mathcal M}_{ji}(m)) \phi_{kj}(\rho_{ji})\rho_{kj}) \\
&= \epsilon^{\mathcal M}_k (\psi^{\mathcal M}_{ki}(m) \tau_{kji} \phi_{kj}(\rho_{ji})\rho_{kj})
\end{split}
\]
We can also check that the compatibility conditions on $\tau$ and $\tau'$ are equivalent:
\[
\begin{split}
&\tau'_{lji}\tau'_{lkj} = \epsilon_l(\rho_{li}^{-1} \tau_{lji} \phi_{lj}(\rho_{ji}) \rho_{lj}
\rho_{lj}^{-1} \tau_{lkj} \phi_{lk}(\rho_{kj}) \rho_{lk}) \\
&= \epsilon_l(\rho_{li}^{-1} \tau_{lji} \phi_{lj}(\rho_{ji}) \tau_{lkj} \phi_{lk}(\rho_{kj}) \rho_{lk}) \\
&= \epsilon_l(\rho_{li}^{-1} \tau_{lji} \tau_{lkj} \phi_{lk}\phi_{kj}(\rho_{ji})\phi_{lk}(\rho_{kj}) \rho_{lk}) \\
&= \epsilon_l(\rho_{li}^{-1} \tau_{lki} \phi_{lk}(\tau_{kji}) \phi_{lk}\phi_{kj}(\rho_{ji})\phi_{lk}(\rho_{kj}) \rho_{lk}) \\
&= \epsilon_l(\rho_{li}^{-1} \tau_{lki} \phi_{lk}(\rho_{ki}) \rho_{lk}
\cdot \rho_{lk}^{-1} \phi_{lk}(\rho_{ki}^{-1}\tau_{lkj}\phi_{kj}(\rho_{ji})\rho_{kj}) \rho_{lk} ) \\
&= \tau'_{lki} \phi'_{lk}(\tau'_{kji})
\end{split}
\]

(2) We note that the twisting functions $\tau_{kji}$ for the modules $\mathcal M_i$ 
do not necessarily belong to the center of the function rings $\mathcal A_k$.
In order to make a compatible theory, we require that the gluings $\phi_{ji}$ of function rings $\mathcal A_i$ are also twisted 
by the adjoint action of $\tau_{kji}$.

The equivalence of twistings are defined in coordination with the categorical equivalence of $\text{Qcoh}(\mathcal A, \tau)$.
But the category itself do not appear in the argument of the theorem.
\end{Rem}

\vskip 1pc

We extend our definition of NC deformations of an algebraic variety $X = \bigcup U_i$ with
$U_i = \text{Spec}(A_i)$ to twisted NC schemes as follows.

\begin{Defn}
A {\em twisted NC deformation} of an algebraic variety $X$ over an Artin local algebra $(R, M)$ is defined to be
a triple $(\mathcal A, \alpha, \tau)$ which satisfies the following conditions:
 
\begin{enumerate}

\item $\mathcal A$ is a collection of associative $R$-algebras $\mathcal A_i$ for $i \in I$ which are flat over $R$.

\item $\alpha$ is a collection of $k$-algebra isomorphisms $\alpha_i: k \otimes_R \mathcal A_i \to A_i$.

\item $\phi^{\mathcal A}$ is a collection of $R$-algebra homomorphisms
$\phi^{\mathcal A}_{ji}: \mathcal A_i \to \mathcal A_j$ for $i < j$.
We set $\phi^{\mathcal A}_{ii} = \text{Id}_{\mathcal A_i}$ for $i = j$.

\item $\tau$ is a collection of elements $\tau_{kji} \in \mathcal A_k$ for $i < j < k$ such that 
$\tau_{kji} \equiv 1 \in A_k$ under $\alpha_k$
and $\tau_{lji}\tau_{lkj} = \tau_{lki}\phi^{\mathcal A}_{lk}(\tau_{kji})$ for $i < j < k < l$.
We set $\tau_{kji} = 1$ if two among $i,j,k$ coincide.

\item $\phi^{\mathcal A}_{kj}\phi^{\mathcal A}_{ji}(x) = \tau_{kji}^{-1}\phi^{\mathcal A}_{ki}(x)\tau_{kji}$ for $i < j < k$ and 
$x \in \mathcal A_i$.

\item $\alpha_j \circ (k \otimes \phi^{\mathcal A}_{ji}) = \phi^A_{ji} \circ \alpha_i$.

\end{enumerate}
\end{Defn}

We note that the $\tau_{kji}$ are automatically invertible because the maximal ideal $M \subset R$ is nilpotent.

An {\em equivalence} of twisted NC deformations 
\[
(\mathcal A, \alpha, \tau) \sim (\mathcal A', \alpha', \tau')
\]
is an equivalence of twisted NC schemes $(\mathcal A, \tau) \sim (\mathcal A', \tau')$  
which is compatible with $\alpha, \alpha'$, i.e., 
$\alpha = \alpha' \circ (k \otimes \epsilon)$.
In this case, there is an equivalence $\Phi: \text{Qcoh}(\mathcal A, \tau) \to \text{Qcoh}(\mathcal A', \tau')$. 

\vskip 1pc

We have a twisted NC deformation functor $\text{Def }^t_X: (\text{Art}) \to (\text{Set})$ 
defined by $\text{Def }^t_X(R) = \{(\mathcal A, \alpha, \tau)\}/\sim$, 
where $\sim$ denotes an equivalence.

Theorem \ref{T1T2} is generalized for the twisted NC deformations:

\begin{Thm}\label{T1T2'}
Let $(R,M), (R',M') \in (\text{Art})$ and $J$ be as in (\ref{ext}), and
let $(\mathcal A, \alpha, \tau)$ be a twisted NC deformation of $X$ over $(R,M)$.
Then the following hold.

(1) There is a class $\xi^{(3,0)} \in J \otimes H^0(X, \bigwedge^3 T_X)$.
If $\xi^{(3,0)} = 0$, then there is a class $\xi^{(2,1)} \in J \otimes H^1(X, \bigwedge^2 T_X)$.
If $\xi^{(2,1)} = 0$, then there is a class $\xi^{(0,3)} \in J \otimes H^3(X, \mathcal O_X)$.
If $\xi^{(0,3)} = 0$, then there is a class $\xi^{(1,2)} \in J \otimes H^2(X, T_X)$.
And $\xi^{(3,0)} = \xi^{(2,1)} = \xi^{(1,2)} = \xi^{(0,3)} = 0$ hold if and only if there exists 
a twisted NC deformation $(\mathcal A', \alpha', \tau')$
over $(R',M')$ which induces $(\mathcal A, \alpha, \tau)$ over $(R,M)$.

(2) Assume that $\xi^{(3,0)} = \xi^{(2,1)} = \xi^{(1,2)} = \xi^{(0,3)} = 0$.
Then the set of equivalence classes of all extensions $(\mathcal A, \alpha, \tau)$ 
corresponds bijectively to a $k$-vector space 
$J \otimes (H^0(X, \bigwedge^2 T_X) \oplus H^1(X, T_X) \oplus H^2(X, \mathcal O_X))$.

(3) The obstruction classes $\xi^{(3,0)}, \xi^{(2,1)}, \xi^{(1,2)}, \xi^{(0,3)}$ are 
functorially determined in the following sense.
Let 
\begin{equation}\label{functorial}
\begin{CD}
0 @>>> J @>>> R' @>>> R @>>> 0 \\
@. @V{\beta_J}VV @V{\beta'}VV @V{\beta}VV \\
0 @>>> J_1 @>>> R'_1 @>>> R_1 @>>> 0
\end{CD}
\end{equation}
be a commutative diagram of extensions as in (\ref{ext}).
Let $(\mathcal A, \alpha, \tau) \in \text{Def }^t_X(R)$ 
and let $(\mathcal A^{(1)}, \alpha^{(1)}, \tau^{(1)}) 
= \text{Def }^t_X(\beta)(\mathcal A) \in \text{Def }^t_X(R_1)$.
Let $\xi^{(p,q)} \in J \otimes H^q(\bigwedge^p T_X)$ and 
$(\xi^{(1)})^{(p,q)} \in J_1 \otimes H^q(\bigwedge^p T_X)$ be the 
obstruction classes of extending $(\mathcal A, \alpha, \tau)$ and 
$(\mathcal A^{(1)}, \alpha^{(1)}, \tau^{(1)})$ 
over $R'$ and $R'_1$, respectively.
Then $(\xi^{(1)})^{(p,q)} = \beta_J(\xi^{(p,q)})$ hold as long as they are defined.
\end{Thm}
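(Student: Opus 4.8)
The plan is to mirror the structure of the proof of Theorem~\ref{T1T2}, introducing the twisting data $\tau$ into the cochain bookkeeping and isolating the one genuinely new phenomenon: a \v{C}ech $3$-cocycle of functions that produces the extra obstruction $\xi^{(0,3)} \in J \otimes H^3(X, \mathcal O_X)$. First I would set up, as before, $\mathcal A'_i = R' \otimes_k A_i$ as $R'$-modules, choose arbitrary $R'$-bilinear multiplications $\times_{R'_i}$ lifting $\times_{R_i}$, arbitrary $R'$-module lifts $\phi'_{ji}$ of the gluings, and now also arbitrary lifts $\tau'_{kji} \in \mathcal A'_k$ of $\tau_{kji}$ satisfying $\tau'_{kji} \equiv 1$ in $A_k$ (so $\tau'_{kji} = 1 + (\text{element of } J \otimes A_k)$ and is automatically invertible). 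I then define the defect cochains: $f_i$, $g_{ji}$, $h_{kji}$ exactly as in Theorem~\ref{T1T2} but with $h_{kji}$ now measuring failure of the \emph{twisted} cocycle condition $\phi'_{kj}\phi'_{ji}(x) - \tau_{kji}^{\prime -1}\phi'_{ki}(x)\tau'_{kji}$, plus a new \v{C}ech $3$-cochain of Hochschild $0$-cochains
\[
\ell_{lkji} = \tau'_{lji}\tau'_{lkj} - \tau'_{lki}\phi'_{lk}(\tau'_{kji}) \in J \otimes A_l
\]
for $i<j<k<l$, which measures the failure of the compatibility among the $\tau$'s. Since $M'J = 0$ and $\tau' \equiv 1$, all the adjoint corrections $\tau_{kji}^{\prime -1}(-)\tau'_{kji}$ collapse modulo $J$, so the leading-order behavior of $f_i$ and $g_{ji}$ is unchanged, and $h_{kji}$ is still a Hochschild $1$-cochain with values in $J \otimes A_k$.

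The core computational step is the twisted analogue of Lemma~\ref{df}: I expect the identities $df_i = 0$ and $dg_{ji} = -f_j(\phi^0_{ji})^{\otimes 3} + \phi^0_{ji}f_i$ to survive verbatim, while $dh_{kji}$ will pick up a term involving $\ell$; schematically $dh_{kji} = -g_{kj}(\phi^0_{ji})^{\otimes 2} + g_{ki} - \phi^0_{kj}g_{ji}$ still holds (the adjoint correction drops mod $J$), but the \v{C}ech coboundary of $h$ on a quadruple $i<j<k<l$ will no longer vanish and instead equal a Hochschild coboundary of $\ell_{lkji}$ — which is zero because the Hochschild differential on $0$-cochains vanishes ($A_l$ commutative), so $\delta h = 0$ holds only after one checks the correction is itself a coboundary of the function $\ell$; more precisely the correct statement will be that $\delta h_{kji}$ equals $d(\text{something built from }\ell)$, hence $h$ descends to a \v{C}ech $2$-cocycle only once $\ell$ is dealt with. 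Finally $\ell$ itself satisfies the \v{C}ech $4$-cocycle identity (the standard ``higher associativity of the twist'' pentagon-type relation), so $v(\ell_{lkji}) := [\ell_{lkji}] \in J \otimes \Gamma(\mathcal O_{U_l})$ defines, after extending to unordered indices via Lemma~\ref{S_n} with $n=4$, a class $\xi^{(0,3)} \in J \otimes \check H^3(X,\mathcal O_X)$. I would then carry out the obstruction cascade in the order dictated by the statement: $\xi^{(3,0)}$ first (choice of associative multiplications, unchanged); then $\xi^{(2,1)}$ (gluing compatible with multiplication, unchanged); then, assuming both vanish so that $f_i = g_{ji} = 0$ can be arranged, $\xi^{(0,3)}$ next, because adjusting the $\tau'$ by $J \otimes A_k$-valued functions changes $\ell$ by a \v{C}ech coboundary and, crucially, \emph{does not disturb} $f_i$ or $g_{ji}$ but \emph{does} feed into $h_{kji}$; once $\xi^{(0,3)} = 0$ we may take $\ell_{lkji} = 0$, at which point $h_{kji}$ is a genuine \v{C}ech $2$-cocycle of Hochschild $1$-cocycles and yields $\xi^{(1,2)} \in J \otimes \check H^2(X,T_X)$ exactly as in Theorem~\ref{T1T2}.

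For part (2), once all four obstructions vanish I would identify the set of extensions with the cochain data $(b_i, c_{ji}, d_{kji})$ — the ambiguities in multiplication, gluing, and twist respectively — modulo equivalences $(\epsilon_i, \rho_{ji})$, writing $\epsilon_i(x) = x + e_i(\bar x)$ and $\rho_{ji} = 1 + r_{ji}$ with $e_i \in J \otimes A_i$, $r_{ji} \in J \otimes A_j$. The twisted analogue of Lemma~\ref{f'} together with the equivalence relations \eqref{e1}, \eqref{e2} and a third relation coming from condition (2) of the equivalence definition for twisted schemes will show: $b_i$ reduces to its Hochschild class in $J \otimes H^0(\bigwedge^2 T_X)$; $c_{ji}$ reduces to a \v{C}ech class in $J \otimes \check H^1(T_X)$; and $d_{kji}$ reduces to a \v{C}ech class in $J \otimes \check H^2(\mathcal O_X)$, after noting that the $r_{ji}$ act on $d_{kji}$ by the \v{C}ech coboundary. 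The upshot is the stated bijection with $J \otimes (H^0(\bigwedge^2 T_X) \oplus H^1(T_X) \oplus H^2(\mathcal O_X))$; here Serre duality / the HKR decomposition is not needed, only the \v{C}ech descriptions. Part (3) is then immediate and formal, exactly as in Theorem~\ref{T1T2}(3): the maps $\beta'$ and $\beta_J$ carry one choice of lifted data to another and all defect cochains $f_i, g_{ji}, h_{kji}, \ell_{lkji}$ transform by $\beta'$, hence their classes by $\beta_J$.

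\textbf{Main obstacle.} The delicate point I expect to occupy most of the work is the precise form of the coboundary/cocycle identities tying $h_{kji}$ to $\ell_{lkji}$ — i.e. verifying that $\delta h$ on a quadruple equals exactly a (Hochschild-trivial) coboundary of $\ell$, and that the order of the cascade is forced, namely that $\xi^{(0,3)}$ must be killed \emph{before} $\xi^{(1,2)}$ because re-choosing $\tau'$ to kill $\ell$ perturbs $h_{kji}$. Getting the signs and the adjoint-action corrections (which survive at order $J$ only in cross terms like $[\tau', \phi']$, all of which must be tracked carefully even though $\tau' \equiv 1$) to line up is where the twisted case genuinely differs from \S2, and is the step I would write out in full rather than sketch.
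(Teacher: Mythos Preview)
Your approach is essentially the paper's: same setup, same cochains (the paper writes $\sigma_{lkji}$ for your $\ell_{lkji}$ and $t_{kji}$ for your $d_{kji}$), same cascade order, and the same treatment of parts~(2) and~(3). Two points deserve correction, however.

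First, the parenthetical ``so $\tau'_{kji} = 1 + (\text{element of }J\otimes A_k)$'' is wrong: the lift satisfies only $\tau'_{kji}\equiv 1 \bmod M'\mathcal A'_k$, not $\bmod J$. For a higher-order base $R$ the given $\tau_{kji}\in\mathcal A_k$ is typically $\ne 1$, so neither is its lift modulo $J$. This matters because the adjoint action of $\tau'_{kji}$ on $\mathcal A'_k$ is then genuinely nontrivial (it is only its action on $J\otimes A_k$ that is trivial, via $M'J=0$).

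Second, and more substantively, your ``main obstacle'' dissolves on contact. Changing $\tau'_{kji}$ by $t_{kji}\in J\otimes A_k$ does \emph{not} perturb $h_{kji}$: one has $(\tau'+t)^{-1}x(\tau'+t)-(\tau')^{-1}x\tau' = \bar x\,t - t\,\bar x = 0$ since $A_k$ is commutative (the paper states exactly this in the proof of part~(2)). By the same mechanism, the \v Cech differential of $h$ on a quadruple $i<j<k<l$ reduces, after the cancellations, to $A^{-1}\phi'_{li}(x)A - B^{-1}\phi'_{li}(x)B$ with $A=\tau'_{lji}\tau'_{lkj}$, $B=\tau'_{lki}\phi'_{lk}(\tau'_{kji})$, $A-B=\sigma_{lkji}\in J\otimes A_l$; this equals $[\,\overline{\phi'_{li}(x)},\sigma_{lkji}\,]=0$. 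So there is no cross-term tying $h$ to $\ell$: the obstructions $\xi^{(0,3)}$ and $\xi^{(1,2)}$ are effectively decoupled, and the paper's invocation of $\sigma_{lkji}=0$ in the proof of $\delta h=0$ is sufficient but not actually needed. The place where the real work lies is the \v Cech closedness of $\sigma$ itself on a \emph{quintuple} $i<j<k<l<m$, which in the paper is a page-long manipulation of products of noncommuting units congruent to $1$ modulo $M'$ using cyclic rotations of factors in $1+J\otimes A_m$. That computation, not an $h$--$\ell$ interaction, is what you should plan to write out in full.
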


\begin{proof}
We will only explain the places where we need modifications from the proof of Theorem \ref{T1T2}.

\vskip 1pc

(1) We let $\mathcal A'_i = R' \otimes_k A_i$ as $R'$-modules with bilinear multiplications $\times_{R'_i}$ and gluings $\phi'_{ji}$
as in the proof of Theorem \ref{T1T2}.
Moreover let $\tau'_{kji} \in \mathcal A'_{kji}$ be elements which induce given $\tau_{kji} \in \mathcal A_{kji}$.

$f_i$ (for associativity of $\times_{R'_i}$), $g_{ji}$ (for compatibility of $\times_{R'_i}$ and $\phi'_{ji}$), $b_i$ (for different $\times_{R'_i}$),  
and $c_{ji}$ (for different $\phi'_{ji}$) are defined in the same formulas.
But we modify the definition of $h_{kji}: A_i \to J \otimes A_k$ (for cocycle condition of $\phi'_{ji}$) by
\[
h_{kji}(\bar x) = \phi'_{kj}\phi'_{ji}(x) - (\tau'_{kji})^{-1} \times_{R'_k} \phi'_{ki}(x) \times_{R'_k} \tau'_{kji}
\]
for $x \in \mathcal A'_i$ and $i < j < k$.
Moreover we consider $t_{kji} \in J \otimes A_k$ for different choice of twists:
\[
\tilde{\tau}'_{kji} = \tau'_{kji} + t_{kji}
\]
and $\sigma_{lkji} \in J \otimes A_l$ for the compatibility condition of the twists:
\[
\sigma_{lkji} = \tau'_{lji} \times_{R'_l} \tau'_{lkj} - \tau'_{lki}\times_{R'_l} \phi'_{lk}(\tau'_{kji}) = 
\tau'_{lji} \times_{R'_l} \tau'_{lkj} \times_{R'_l} (\phi'_{lk}(\tau'_{kji}))^{-1} \times_{R'_l} (\tau'_{lki})^{-1} - 1
\]
for $i < j < k < l$, where the second equality is a consequence of the fact that, if $z \in J \otimes \mathcal A_l$ and
$\tau - 1 \in M' \otimes \mathcal A_l$, then $z\tau = z$.

\vskip 1pc

Lemma \ref{df} (1), (2) and Lemma \ref{f'} (1), (2) hold without change, but 
we need to modify Lemma \ref{df} (3), (4) and Lemma \ref{f'} (3). 
Therefore the obstruction $\xi^{(3,0)}$ is defined without change.

We can assume from now that $\xi^{(3,0)} = 0$.
Thus we assume that $f_i = 0$, $db_i = 0$, $dg_{ji} = 0$, and $\times_{R'_i}$ is 
an associative multiplication on $\mathcal A_i$.

\vskip 1pc

The statement of Lemma \ref{df} (3) also holds without change, but the proof need to be modified as follows:
Since $\tau'_{kji} \equiv 1 \mod M'$ and $\times_{R'_k}$ is associative, we have
\[
\begin{split}
&\phi'_{kj}\phi'_{ji}(x \times_{R'_i} y) 
= \phi'_{kj}(\phi'_{ji}(x) \times_{R'_j} \phi'_{ji}(y) + g_{ji}(\bar x,\bar y)) \\
&= \phi'_{kj}\phi'_{ji}(x) \times_{R'_k} \phi'_{kj}\phi'_{ji}(y) + g_{kj}(\phi^0_{ji}(\bar x),\phi^0_{ji}(\bar y)) 
+ \phi^0_{kj}g_{ji}(\bar x,\bar y) \\
&= ((\tau'_{kji})^{-1} \times_{R'_k} \phi'_{ki}(x) \times_{R'_k} \tau'_{kji} + h_{kji}(\bar x)) \times_{R'_k} 
((\tau'_{kji})^{-1} \times_{R'_k} \phi'_{ki}(y) \times_{R'_k} \tau'_{kji} + h_{kji}(\bar y)) \\
&+ g_{kj}(\phi^0_{ji}(\bar x),\phi^0_{ji}(\bar y)) + \phi^0_{kj}g_{ji}(\bar x,\bar y) \\
&= (\tau'_{kji})^{-1} \times_{R'_k} \phi'_{ki}(x) \times_{R'_k} \phi'_{ki}(y) \times_{R'_k} \tau'_{kji} 
+ \phi^0_{ki}(\bar x)h_{kji}(\bar y) + h_{kji}(\bar x)\phi^0_{ki}(\bar y) \\
&+ g_{kj}(\phi^0_{ji}(\bar x),\phi^0_{ji}(\bar y)) + \phi^0_{kj}g_{ji}(\bar x,\bar y) \\
&= (\tau'_{kji})^{-1} \times_{R'_k} \phi'_{ki}(x \times_{R'_i} y) \times_{R'_k} \tau'_{kji} - g_{ki}(\bar x,\bar y) + \phi^0_{ki}(\bar x)h_{kji}(\bar y) 
+ h_{kji}(\bar x) \phi^0_{ki}(\bar y) \\
&+ g_{kj}(\phi^0_{ji}(\bar x),\phi^0_{ji}(\bar y)) + \phi^0_{kj}g_{ji}(\bar x,\bar y).
\end{split}
\]
Hence
\[
\begin{split}
&dh_{kji}(\bar x,\bar y) := \phi^0_{ki}(\bar x) h_{kji}(\bar y) - h_{kji}(\bar x \bar y) + h_{kji}(\bar x)\phi^0_{ki}(\bar y) \\
&= - g_{kj}(\phi^0_{ji}(\bar x),\phi^0_{ji}(\bar y)) + g_{ki}(\bar x,\bar y) - \phi^0_{kj}g_{ji}(\bar x,\bar y).
\end{split}
\]
that is
\[
dh_{kji} = - g_{kj}(\phi^0_{ji})^{\otimes 2} + g_{ki} - \phi^0_{kj}g_{ji}.
\]
This is the same statement as Lemma \ref{df} (3).
Then the \v Cech cocycle $v(g_{ji})$ is well-defined for arbitrary pairs $i,j$, and determines an obstruction class
$\xi^{(2,1)}$.

We assume from now that $\xi^{(2,1)} = 0$.
Then we assume that $g_{ji} = 0$, $dc_{ji} = 0$, $dh_{kji} = 0$, 
the gluings $\phi'_{ji}$ are compatible with the multiplications $\times_{R'_i}$.
We will omit to write $\times_{R'_i}$ from now on.

\vskip 1pc

We will consider the obstruction $\sigma_{lkji}$ for the extension of twists before considering the obstructions $f_{kji}$.

The following lemmas show that the \v Cech $3$-cochain $\{\sigma_{lkji}\}$ is closed and determined up to coboundaries
as long as $i < j < k < l < m$:

\begin{Lem}
Let $i < j < k < l < m$.
Then 
\[
\phi^0_{ml}(\sigma_{lkji}) - \sigma_{mkji} + \sigma_{mlji} - \sigma_{mlki} + \sigma_{mlkj} = 0.
\]
\end{Lem}

\begin{proof}
We use the following identities in the calculation:
if $\tau \equiv 1 \mod M'\mathcal A_l$ and $z,z' \in J \otimes \mathcal A_l$, then
$\tau^{-1}(1+z)\tau = 1+z$ and $(1+z)^{-1} = 1 - z$, $(1+z)(1+z') = 1+z+z'$, etc. 
For example, if $\tau_i \equiv 1 \mod M'\mathcal A_l$ and $\tau_1 \dots \tau_m - 1 \in J \otimes \mathcal A_l$, then
we have a cyclic permutation $\tau_1 \tau_2 \dots \tau_m - 1 = \tau_2 \dots \tau_m \tau_1 - 1$.
The left hand side of the claim is equal to:
\[
\begin{split}
&= \phi'_{ml}(\sigma_{lkji}) - \sigma_{mkji} + \sigma_{mlji} - \sigma_{mlki} + \sigma_{mlkj} \\
&= \phi'_{ml}(\tau'_{lji}) \phi'_{ml}(\tau'_{lkj}) - \phi'_{ml}(\tau'_{lki}) (\tau'_{mlk})^{-1} \phi'_{mk}(\tau'_{kji})\tau'_{mlk} \\
&- (\tau'_{mji} \tau'_{mkj} - \tau'_{mki} \phi'_{mk}(\tau'_{kji})) 
+ \tau'_{mji} \tau'_{mlj} - \tau'_{mli} \phi'_{ml}(\tau'_{lji}) \\
&- (\tau'_{mki} \tau'_{mlk} - \tau'_{mli} \phi'_{ml}(\tau'_{lki})) 
+ \tau'_{mkj} \tau'_{mlk} - \tau'_{mlj} \phi'_{ml}(\tau'_{lkj}), 
\end{split}
\]
because $\phi'_{ml}\phi'_{lk}(\tau'_{kji}) = (\tau'_{mlk})^{-1} \phi'_{mk}(\tau'_{kji})\tau'_{mlk}$, 
where we note that $h_{mlk}(1) = 0$.

Let $\tau, \tau' \equiv 1 \mod M'\mathcal A_l$ and $z,z' \in J \otimes \mathcal A_l$. 
If $\tau - \tau' = z$, then $z = \tau (\tau')^{-1} - 1$.
Indeed we have $z = z \tau' = \tau - \tau'$.
If $z = \tau - 1$ and $z' = \tau' - 1$, then $z + z' = \tau \tau' - 1$.
Indeed we have $1 + z + z' = (1+z)(1+z') = \tau \tau'$.
Therefore the last formula is equal to:
 \[
\begin{split}
&=\phi'_{ml}(\tau'_{lji}) \phi'_{ml}(\tau'_{lkj}) (\tau'_{mlk})^{-1} (\phi'_{mk}(\tau'_{kji}))^{-1} \tau'_{mlk} (\phi'_{ml}(\tau'_{lki}))^{-1} \\
&\times \tau'_{mki} \phi'_{mk}(\tau'_{kji}) (\tau'_{mkj})^{-1} (\tau'_{mji})^{-1}
\times \tau'_{mji} \tau'_{mlj} (\phi'_{ml}(\tau'_{lji}))^{-1} (\tau'_{mli})^{-1} \\
&\times \tau'_{mli}\phi'_{ml}(\tau'_{lki}) (\tau'_{mlk})^{-1}(\tau'_{mki})^{-1} 
\times \tau'_{mkj} \tau'_{mlk} (\phi'_{ml}(\tau'_{lkj}))^{-1} (\tau'_{mlj})^{-1} - 1,
\end{split}
\]
where we divided the product by $\times$ to the factors which belong to $1 + J \otimes \mathcal A_l$. 
This is equal by the cyclic rotation to:
\[
\begin{split}
&=\phi'_{ml}(\tau'_{lji}) \phi'_{ml}(\tau'_{lkj}) (\tau'_{mlk})^{-1} (\phi'_{mk}(\tau'_{kji}))^{-1} 
\tau'_{mlk} (\phi'_{ml}(\tau'_{lki}))^{-1} \\
&\times \phi'_{ml}(\tau'_{lki}) (\tau'_{mlk})^{-1}(\tau'_{mki})^{-1} \tau'_{mli}
\times \tau'_{mkj} \tau'_{mlk} (\phi'_{ml}(\tau'_{lkj}))^{-1} (\tau'_{mlj})^{-1} \\
&\times \tau'_{mki} \phi'_{mk}(\tau'_{kji}) (\tau'_{mkj})^{-1} (\tau'_{mji})^{-1}
\times \tau'_{mji} \tau'_{mlj} (\phi'_{ml}(\tau'_{lji}))^{-1} (\tau'_{mli})^{-1} - 1.
\end{split}
\]
Using $\tau'_{mlk} (\phi'_{ml}(\tau'_{lki}))^{-1}\phi'_{ml}(\tau'_{lki}) (\tau'_{mlk})^{-1} = 1$ and the cyclic rotation, we have
\[
\begin{split}
&= \phi'_{ml}(\tau'_{lkj}) (\tau'_{mlk})^{-1} (\phi'_{mk}(\tau'_{kji}))^{-1} (\tau'_{mki})^{-1} \tau'_{mli} \phi'_{ml}(\tau'_{lji}) \\
&\times (\phi'_{ml}(\tau'_{lji}))^{-1} (\tau'_{mli})^{-1} \tau'_{mji} \tau'_{mlj} 
\times \tau'_{mkj} \tau'_{mlk} (\phi'_{ml}(\tau'_{lkj}))^{-1} (\tau'_{mlj})^{-1}\\
&\times \tau'_{mki} \phi'_{mk}(\tau'_{kji}) (\tau'_{mkj})^{-1} (\tau'_{mji})^{-1} - 1.
\end{split}
\]
Using $\tau'_{mli} \phi'_{ml}(\tau'_{lji})(\phi'_{ml}(\tau'_{lji}))^{-1} (\tau'_{mli})^{-1} = 1$ and the cyclic rotation
of $\phi'_{ml}(\tau'_{lkj}) (\tau'_{mlk})^{-1}$ and $\tau'_{mkj}$, we have
\[
\begin{split}
&= (\phi'_{mk}(\tau'_{kji}))^{-1} (\tau'_{mki})^{-1} \tau'_{mji} \tau'_{mlj}\phi'_{ml}(\tau'_{lkj}) (\tau'_{mlk})^{-1} \\
&\times \tau'_{mlk} (\phi'_{ml}(\tau'_{lkj}))^{-1} (\tau'_{mlj})^{-1}\tau'_{mkj} 
\times \tau'_{mki} \phi'_{mk}(\tau'_{kji}) (\tau'_{mkj})^{-1} (\tau'_{mji})^{-1} - 1. 
\end{split}
\]
And similarly, 
\[
\begin{split}
&= (\phi'_{mk}(\tau'_{kji}))^{-1} (\tau'_{mki})^{-1} \tau'_{mji} \tau'_{mkj} 
\times (\tau'_{mkj})^{-1} (\tau'_{mji})^{-1} \tau'_{mki} \phi'_{mk}(\tau'_{kji}) - 1 \\
&= 1 - 1 = 0.
\end{split}
\]
\end{proof}

We have the following lemma for the change of choices of extended twists:

\begin{Lem}
Let $\tilde{\sigma}_{lkji} \in J \otimes A_l$ be the compatibility cochain for a different choice of twists 
$\tilde{\tau}'_{kji} = \tau'_{kji} + t_{kji}$.
Then the obstruction cochain is changed as follows:
\[
\tilde{\sigma}_{lkji} - \sigma_{lkji} = - \phi^0_{lk}(t_{kji}) + t_{lji} - t_{lki} + t_{lkj}
\]
for $i < j < k < l$.
\end{Lem}

\begin{proof}
\[
\begin{split}
&(\tau'_{lji}+t_{lji})(\tau'_{lkj}+t_{lkj}) - (\tau'_{lki}+t_{lki})\phi'_{lk}(\tau'_{kji}+t_{kji})
- (\tau'_{lji}\tau'_{lkj} - \tau'_{lki}\phi'_{lk}(\tau'_{kji})) \\
&= t_{lji} + t_{lkj} - t_{lki} - \phi^0_{lk}(t_{kji}).
\end{split}
\]
\end{proof}

\begin{Rem}
We do not know whether the statements of the above lemmas follow from a long exact sequence associated to a 
short exact sequence
\[
0 \to J \otimes \mathcal O_X \to (\mathcal A')^{\times} \to \mathcal A^{\times} \to 1
\]
because $\mathcal A^{\times}$, etc. are non-abelian. 
\end{Rem}

We can extend the definition of the $\sigma_{i_0i_1i_2i_3}$ for arbitrary indexes $i_0,i_1,i_2,i_3$ by Lemma \ref{S_n}.
Thus we define the obstruction class $\xi^{(0,3)}$ whose vanishing is equivalent to the extendability
of the twists.

\vskip 1pc

Now let us assume that the cohomology class $\xi^{(0,3)} = 0$.
Then we may assume that $\sigma_{lkji} = 0$ for all $i,j,k,l$.
We will lastly treat the obstruction $h_{kji}$.

The following is the modifications of Lemma \ref{df} (4) and Lemma \ref{f'} (3):

\begin{Lem}\label{df+f'} 
Assume $i < j < k < l$. 
Then:

(1) $\phi^0_{lk}h_{kji} - h_{lji} + h_{lki} - h_{lkj}\phi^0_{ji} = 0$. 

(2) $h'_{kji} = h_{kji} + \phi^0_{kj}c_{ji} - c_{ki} + c_{kj}\phi^0_{ji}$.
\end{Lem}

\begin{proof}
(1) If $z \in 1 + J \otimes A_l$ and $\tau \equiv 1 \mod M'\mathcal A_l$, then $z = \tau^{-1}z\tau$.
Hence
\[
\begin{split}
&\phi^0_{lk}h_{kji}(\bar x) - h_{lji}(\bar x) + h_{lki}(\bar x) - h_{lkj}(\phi^0_{ji}(\bar x)) \\
&= \phi'_{lk}(\phi'_{kj}\phi'_{ji}(x) - (\tau_{kji})^{-1}\phi'_{ki}(x)\tau_{kji}) - (\phi'_{lj}\phi'_{ji}(x) - (\tau_{lji})^{-1}\phi'_{li}(x)\tau_{lji}) \\
&+ (\phi'_{lk}\phi'_{ki}(x) - (\tau_{lki})^{-1}\phi'_{li}(x)\tau_{lki}) 
- (\phi'_{lk}\phi'_{kj}\phi'_{ji}(x) - (\tau_{lkj})^{-1}\phi'_{lj}\phi'_{ji}(x)\tau_{lkj}) \\
&= \phi'_{lk}\phi'_{kj}\phi'_{ji}(x) - \phi'_{lk}(\tau_{kji})^{-1}\phi'_{lk}\phi'_{ki}(x)\phi'_{lk}(\tau_{kji}) \\
&- (\tau_{lkj})^{-1}(\phi'_{lj}\phi'_{ji}(x) - (\tau_{lji})^{-1}\phi'_{li}(x)\tau_{lji})\tau_{lkj} \\
&+ \phi'_{lk}(\tau_{kji})^{-1}(\phi'_{lk}\phi'_{ki}(x) - (\tau_{lki})^{-1}\phi'_{li}(x)\tau_{lki}) \phi'_{lk}(\tau_{kji}) \\
&- (\phi'_{lk}\phi'_{kj}\phi'_{ji}(x) - (\tau_{lkj})^{-1}\phi'_{lj}\phi'_{ji}(x)\tau_{lkj}) \\
&= (\tau_{lkj})^{-1} (\tau_{lji})^{-1}\phi'_{li}(x)\tau_{lji}\tau_{lkj} 
- \phi'_{lk}(\tau_{kji})^{-1}(\tau_{lki})^{-1}\phi'_{li}(x)\tau_{lki} \phi'_{lk}(\tau_{kji}) \\
&= 0
\end{split}
\]
because $\tau_{lji}\tau_{lkj} = \tau_{lki} \phi'_{lk}(\tau_{kji})$ by $\sigma_{lkji} = 0$.

(2) We calculate
\[
\begin{split}
&h'_{kji}(\bar x) = \phi'_{kj}(\phi'_{ji}(x) + c_{ji}(\bar x)) + c_{kj}(\phi^0_{ji}(\bar x)) - \tau_{kji}^{-1}(\phi'_{ki}(x) + c_{ki}(\bar x))\tau_{kji} \\
&= h_{kji}(\bar x) + \phi^0_{kj}(c_{ji}(\bar x)) - c_{ki}(\bar x) + c_{kj}(\phi^0_{ji}(\bar x)).
\end{split}
\]
\end{proof}

The rest of the proof of the part (1) of Theorem \ref{T1T2'} is the same as in Theorem \ref{T1T2}.

\vskip 1pc

(2) The additional choice of extensions comes from the change of twisting given by 
$\tau'_{kji} \mapsto \tau'_{kji} + t_{kji}$ for $t_{kji} \in J \otimes \mathcal A_k$.
The \v Cech $2$-cochain $\{t_{kji}\}$ is closed if and only if the condition $\sigma_{lkji} = 0$ is preserved.

We note that this change of twisting does not affect the gluing of algebras $\mathcal A_i$.
Indeed $(\tau'_{kji} + t)^{-1}x(\tau'_{kji} + t) = (\tau'_{kji})^{-1}x\tau'_{kji} + \bar xt - t\bar x = (\tau'_{kji})^{-1}x\tau'_{kji}$
for $x \in \mathcal A_k$, because $A_k$ is commutative.

We claim that $\{t_{kji}\}$ is a \v Cech coboundary if and only if there is an equivalence of twisted schemes
$(\mathcal A', \tau') \cong (\mathcal A', \tau' + t)$. 
More precisely there is a \v Cech cocycle $\{s_{ji}\}$ with $s_{ji} \in J \otimes \mathcal A_j$ such that 
$t_{kji} = \phi^0_{kj}(s_{ji}) - s_{ki} + s_{kj}$ if and only if there is an equivalence given by $\rho_{ji} = 1 + s_{ji}$.

Indeed we have
$(1+t_{kji}) = (1 - s_{ki})\phi'_{kj}(1+s_{ji})(1+s_{kj})$, i.e.,
$t_{kji} = - s_{ki} + \phi^0_{kj}(s_{ji}) + s_{kj}$.

\vskip 1pc

The proof of (3) is the same as in Theorem \ref{T1T2}.
\end{proof}

The existence of a semi-universal twisted NC deformation is proved in a similar way, and we obtain the 
following corollary:

\begin{Thm}\label{semi-universal'}
Let $X$ be a smooth algebraic variety, and
assume that 
\[
\begin{split}
&n = \dim H^0(X, \bigwedge^2 T_X) + \dim H^1(X, T_X) + \dim H^2(X, \mathcal O_X), \\
&m = \dim H^0(X, \bigwedge^3 T_X) + \dim H^1(X, \bigwedge^2 T_X) + \dim H^2(X, T_X) + \dim H^3(X, \mathcal O_X)
\end{split}
\]
are finite.
Then there exists a semi-universal deformation for the twisted NC deformation functor $\text{Def}\,^t_X$ over 
\[
R := k[[t_1,\dots,t_n]]/(f_1,\dots,f_m)
\]
for some formal power series $f_i$ of orders in $[2, \infty]$.
\end{Thm}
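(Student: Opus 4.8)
The plan is to reproduce the proof of Theorem~\ref{semi-universal} word for word, replacing $\text{Def}_X$ by $\text{Def}^t_X$, Proposition~\ref{Schlessinger condiiton} by its twisted analogue, and Theorem~\ref{T1T2} by Theorem~\ref{T1T2'}. Granting that $\text{Def}^t_X$ satisfies Schlessinger's conditions $(\tilde H)$ and $(H_e)$ of \cite{Sernesi} Chapter~2, the assumed finiteness of $T^1 = H^0(X,\bigwedge^2 T_X)\oplus H^1(X,T_X)\oplus H^2(X,\mathcal O_X)$ and $T^2 = H^0(X,\bigwedge^3 T_X)\oplus H^1(X,\bigwedge^2 T_X)\oplus H^2(X,T_X)\oplus H^3(X,\mathcal O_X)$ — which are identified with the tangent and obstruction spaces of the functor by Theorem~\ref{T1T2'}(1),(2) — yields a semi-universal (hull) deformation by the standard construction. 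Writing $R = k[[t_1,\dots,t_n]]/I$ for the hull with $n=\dim T^1$, the isomorphism $\mathfrak m_R/\mathfrak m_R^2\cong (T^1)^\vee$ forces $I\subseteq (t_1,\dots,t_n)^2$, and the functoriality of the obstruction classes in Theorem~\ref{T1T2'}(3) shows $I$ is generated by at most $m=\dim T^2$ power series $f_1,\dots,f_m$, each necessarily of order at least $2$ (possibly zero); this gives the asserted presentation.

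For $(\tilde H)$ I would prove the twisted analogue of Lemma~\ref{glue}. With $P=k[[t_1,\dots,t_n]]$, $R_i=P/I_i$, $R_0=P/(I_1+I_2)$, $R'=P/(I_1\cap I_2)$, and twisted NC deformations $(\mathcal A^{(i)},\alpha^{(i)},\tau^{(i)})$ over $R_i$ for $i=1,2$ agreeing over $R_0$, I set $\mathcal A'_i = \mathcal A^{(1)}_i\times_{\mathcal A^{(0)}_i}\mathcal A^{(2)}_i$, $\phi'_{ji}=\phi^{(1)}_{ji}\times\phi^{(2)}_{ji}$, and $\tau'_{kji}=(\tau^{(1)}_{kji},\tau^{(2)}_{kji})\in\mathcal A'_k$, which is legitimate because the two components restrict to the same element of $\mathcal A^{(0)}_k$. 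The $P$-module isomorphism $(P/I_1\otimes A_i)\times_{P/(I_1+I_2)\otimes A_i}(P/I_2\otimes A_i)\cong P/(I_1\cap I_2)\otimes A_i$ established in the proof of Lemma~\ref{glue} gives flatness of $\mathcal A'_i$ over $R'$ and the identification $k\otimes_{R'}\mathcal A'_i\cong A_i$, hence $\alpha'$. Every remaining axiom — associativity of the multiplication, the homomorphism property of $\phi'_{ji}$ and its transitivity via $\tau'$, the adjoint relation $\phi'_{kj}\phi'_{ji}(x)=(\tau'_{kji})^{-1}\phi'_{ki}(x)\tau'_{kji}$, and the cocycle relation $\tau'_{lji}\tau'_{lkj}=\tau'_{lki}\phi'_{lk}(\tau'_{kji})$ — is an \emph{equality} in $\mathcal A'_i$, so it may be verified in each component, where it holds by hypothesis. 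This produces a twisted NC deformation $\mathcal A'$ over $R'$ inducing the $\mathcal A^{(i)}$, which is surjectivity in $(\tilde H)$.

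For $(H_e)$ I would argue exactly as in Proposition~\ref{Schlessinger condiiton}: the natural map can fail to be injective only when a self-equivalence of the deformation induced over $R_0$ does not extend over $R_1$; but for $R_0=k$ the only twisted NC deformation is $X$ itself with $\tau\equiv 1$, and any self-equivalence $(\epsilon_i,\rho_{ji})$ compatible with $\alpha$ necessarily has $\epsilon_i=\text{Id}_{A_i}$, hence acts trivially on $\text{Def}^t_X(R_1)$. Therefore the map is bijective when $R_0=k$ and $R_1=k[\epsilon]/(\epsilon^2)$, and the finiteness of $T^1$ supplies the last needed finiteness hypothesis; together with $(\tilde H)$ this gives the existence of a hull, and the analysis of the preceding paragraph pins down its shape.

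The step I expect to be the genuine obstacle is the twisted version of Lemma~\ref{glue}: in \S3 the twisting data also controls the gluing homomorphisms through the adjoint action, so one must confirm not only that the pair $(\tau^{(1)}_{kji},\tau^{(2)}_{kji})$ lies in $(\mathcal A'_k)^\times$ but that it is simultaneously compatible with the fiber-product homomorphisms $\phi'$ in the sense of both the adjoint relation and the cocycle relation. Because all of these are identities rather than existence statements, this reduces to the fact that fiber products of rings and modules commute with the tensor products appearing in the axioms — which is precisely the content of the $P$-module isomorphism invoked above — but it is the one place where the argument uses the structure of \S3 and is not a purely formal transcription of \S2.
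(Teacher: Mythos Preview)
Your approach is the paper's: the paper's entire proof is the sentence ``The existence of a semi-universal twisted NC deformation is proved in a similar way,'' so you have in fact supplied considerably more detail than the original, and your twisted fibre-product version of Lemma~\ref{glue} is correct.

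There is one place where the transcription of \S2 does not go through verbatim. In Proposition~\ref{Schlessinger condiiton} the non-twisted $(H_e)$ argument rests on the fact that $(X,\alpha)$ has \emph{no} nontrivial automorphism over $k$. In the twisted setting this is false: a self-equivalence of $(X,1)$ compatible with $\alpha$ does have $\epsilon_i=\text{Id}_{A_i}$ as you say, but the $\rho_{ji}\in A_j^\times$ are only constrained by the multiplicative relation $\rho_{ki}=\phi^0_{kj}(\rho_{ji})\rho_{kj}$ and are in general nontrivial. Your clause ``hence acts trivially on $\text{Def}^t_X(R_1)$'' does not follow; if two equivalences $e^{(1)},e^{(2)}$ over $R_1,R_2$ restrict to self-equivalences over $k$ with different $\rho$-components, it is not obvious that one can modify $e^{(2)}$ to make them agree, since for a \emph{self}-equivalence over $R_2$ the lifted $\hat\rho_{ji}$ must commute with the image of $\phi_{ji}$, a genuine constraint once $\mathcal A_j$ is non-commutative.

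The cleanest way around this is to bypass automorphisms and read off $(H_e)$ directly from Theorem~\ref{T1T2'}(2). For $R_2=k[\epsilon]$ the projection $R'=R_1\times_k k[\epsilon]\to R_1$ is a split small extension with kernel $\cong k$, so each fibre of $\text{Def}^t_X(R')\to\text{Def}^t_X(R_1)$ is a $T^1$-torsor. Restriction along $R'\to k[\epsilon]$ carries the torsor data $(b_i,c_{ji},t_{kji})$ to exactly the first-order class in $\text{Def}^t_X(k[\epsilon])\cong T^1$, and the pullback of $\mathcal A^{(1)}$ along the splitting maps to $0$; thus the map on each fibre is a $T^1$-equivariant bijection, which is $(H_e)$.
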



Graduate School of Mathematical Sciences, University of Tokyo,
Komaba, Meguro, Tokyo, 153-8914, Japan. 

kawamata@ms.u-tokyo.ac.jp

\end{document}